\documentclass[onefignum,onetabnum]{siamart251216}
\usepackage{amsfonts}

\usepackage{bm}
\usepackage{algorithm}
\usepackage{algpseudocode}
\usepackage{url}

\newcommand{\dd}{\,\mathrm{d}}
\newcommand{\ii}{\mathrm{i}}
\newcommand{\dt}{\Delta t}
\newcommand{\dV}{\Delta V}

\newcommand{\kappaC}{\kappa}
\newcommand{\xx}{\bm{x}}
\newcommand{\vv}{\bm{v}}

\newcommand{\PP}{\bm{P}}
\newcommand{\AV}{\bm{A}}
\newcommand{\UU}{\bm{U}}
\newcommand{\JJ}{\bm{J}}
\newcommand{\EE}{\bm{E}}
\newcommand{\BB}{\bm{B}}
\newcommand{\kk}{\bm{k}}

\newcommand{\grad}{\nabla}
\newcommand{\curl}{\nabla\times}
\newcommand{\divg}{\nabla\cdot}

\newcommand{\curlh}{\nabla_h\times}
\newcommand{\divgh}{\nabla_h\cdot}

\newcommand{\DA}{\mathsf{D}}
\newcommand{\Sbar}{\overline{S}}
\newcommand{\vbar}{\overline{\vv}}
\newcommand{\Ubar}{\overline{\UU}}
\newcommand{\gphibar}{\overline{\grad\phi}}

\newcommand{\Etot}{\mathcal{E}}

\title{An Energy-Conserving Unstaggered Electromagnetic-Potential Particle-in-Cell Method, Part I: Non-relativistic Generalized-Momentum Formulation}

\author{Andrew J. Christlieb\thanks{Department of Computational Mathematics, Science and Engineering, Michigan State University, East Lansing, MI.}\and Luis Chac\'{o}n\thanks{Los Alamos National Laboratory, Los Alamos, NM}\and Sining Gong\thanks{Corresponding author. Department of Computational Mathematics, Science and Engineering, Michigan State University, East Lansing, MI}(\email{gongsini@msu.edu}).}

\headers{Energy-Conserving Potential PIC}{A. J. Christlieb, L. Chac\'{o}n, and S. Gong}

\usepackage{subcaption}

\begin{document}
\maketitle

\begin{abstract}
We develop an unstaggered, potential-based particle-in-cell method for the nonrelativistic Vlasov--Maxwell system in the Lorenz gauge.  The field update is written as a Crank--Nicolson discretization of first-order wave systems for the scalar potential, the vector potential, and their time derivatives.  The charge density is not deposited directly; instead it is advanced from the discrete continuity equation using the current deposited from the particles.  
This opens up algorithmic flexibility with a range of innovation, including unstaggered mesh layouts that preserve the Lorenz gauge and Gauss’s law at the discrete level. In the potential formulation, this source ordering also permits preservation of the Lorenz gauge and Gauss's law at the discrete level.   
To extend the paradigm to an energy-conserving formulation, we introduce a consistent orbit-averaged scatter, gather and particle push.  For energy consistency,  the update of the  canonical momentum  is modified by replacing the point-wise midpoint derivative of the vector potential with an orbit-averaged-discrete-gradient of the mesh-interpolated vector potential consistent with the orbit-average maps.  This construction satisfies an exact finite-difference chain rule along each particle orbit.  As a result, the particle work equals the mesh work appearing in the Crank--Nicolson field-energy balance, yielding exact total-energy conservation up to nonlinear solver tolerance and roundoff.  We demonstrate exact energy conservation of the method in 3D on the cold two-stream instability.
\end{abstract}

\begin{keywords}
particle-in-cell, Vlasov--Maxwell, generalized momentum, Lorenz gauge, Crank--Nicolson, energy conservation, discrete gradient
\end{keywords}

\begin{MSCcodes}
65M12, 65M22, 65M75, 65P10, 78M31
\end{MSCcodes}

\section{Introduction}
\label{sec:introduction}

The purpose of this paper is to close a gap between two desirable features of
potential-based particle-in-cell (PIC) methods: exact preservation of the
Lorenz-gauge/Gauss-law structure and exact conservation of the discrete
electromagnetic energy.  The generalized-momentum formulation introduced in
\cite{ChristliebSandsWhite2025PartI} gives a natural way to evolve particles
using scalar and vector potentials without explicitly differencing the vector
potential in time.  The gauge-conserving developments in
\cite{ChristliebSandsWhite2024PartII,ChristliebSandsWhite2025PartIII} show that,
for properly time-consistent wave solvers, mapping current first and then
advancing charge from continuity gives a unstaggered method that preserves the
Lorenz gauge and Gauss's law.  The missing piece is an energy-conserving
particle coupling that is fully consistent with the same mesh-interpolated
vector potential used in the generalized momentum.

Since its inception, a central objective in the construction of robust PIC
methods has been to preserve the geometric and conservation structure of the
underlying plasma model.  As reviewed by Verboncoeur \cite{Verboncoeur2005},
PIC methods combine a Lagrangian particle representation of the distribution
function with an Eulerian field solve, and their long-time reliability depends
on the compatibility of interpolation, current deposition, field evolution, and
the particle pusher.  In electromagnetic PIC this has traditionally meant
preserving the Maxwell involutions, especially Gauss's law and the divergence constraints 
$\nabla\cdot\BB=0$, while using particle updates that respect the phase-space
structure of the Lorentz force.  Charge-conserving current deposition and
volume-preserving or symplectic particle pushers are examples of this broader
structure-preserving viewpoint \cite{BirdsallLangdon1985,HockneyEastwood1988,VillasenorBuneman1992,Esirkepov2001}.

Energy conservation has been a central concern since the early development of
kinetic plasma algorithms.  The work of Lewis \cite{Lewis1970} and Langdon
\cite{Langdon1973} already identified the connection between discrete
particle-field work, numerical heating, and long-time reliability.  The
implicit-PIC literature of the late 1970s and early 1980s developed this theme
further by coupling particles and fields implicitly in order to relax explicit
Debye-length, plasma-frequency, and finite-grid-instability restrictions
\cite{Mason1981,CohenLangdonFriedman1982,BrackbillForslund1982,LangdonCohenFriedman1983}.
These methods were major advances, but many were based on approximate or
linearized implicit couplings and therefore did not always yield a closed,
finite-time-step, fully discrete energy theorem.

A modern paradigm shift occurred around 2011, when exact energy conservation was
built directly into the nonlinear particle-field coupling.  In the electrostatic
Vlasov--Amp\`ere setting, Chen, Chac\'on, and Barnes
\cite{ChenChaconBarnes2011} introduced a fully implicit PIC method in which the
current in Amp\`ere's law is an orbit-averaged current computed from the same
particle trajectory used in the particle energy update.  When the nonlinear
system is converged, the field-energy change and particle-energy change cancel
as a fully discrete identity.  Markidis and Lapenta \cite{MarkidisLapenta2011}
developed a closely related fully implicit energy-conserving method in the same
period.  The essential design principle in these works is that the discrete
current and the discrete particle work are not independent approximations; they
are two representations of the same particle-field exchange.

Over the last fifteen years, this principle has been extended in several
directions.  Fully implicit orbit-averaged methods have been generalized to
mapped and body-fitted meshes, Vlasov--Darwin electromagnetic models,
curvilinear geometries, conducting boundaries, finite-grid-stability analyses,
and local energy conservation laws
\cite{ChaconChenBarnes2013,ChenChacon2014Darwin1D,ChenChacon2015DarwinMultiD,ChaconChen2016Curvilinear,ChaconChen2019Boundary,BarnesChacon2021FiniteGrid,ChaconChen2025Local}.
Semi-implicit methods pursue the same cancellation while avoiding a full
nonlinear particle solve.  Lapenta's ECsim method \cite{Lapenta2017ECSIM} uses a
mass-matrix coupling to obtain exact total-energy conservation, and later
semi-implicit schemes enforce both charge and energy conservation through
Gauss-law corrections, relativistic extensions, or compatible finite-element
couplings \cite{ChenToth2019GLECSIM,ChenEtAl2020RelativisticSemiImplicit,CamposPintoPages2022ChECSIM}.

A complementary line of work uses geometric, variational, and Hamiltonian
structure.  Variational electromagnetic PIC methods derive the discrete
particle-field system from an action principle, so that discrete gauge symmetry
implies a discrete Gauss law \cite{SquireQinTang2012,EvstatievShadwick2013}.
Noncanonical symplectic PIC and the GEMPIC framework preserve the Hamiltonian
or Poisson structure, the Maxwell constraints, and associated Casimirs at the
semidiscrete or fully discrete level
\cite{XiaoEtAl2015,KrausEtAl2017GEMPIC}.  Energy-conserving time propagation for
GEMPIC and related compatible finite-element formulations shows how the
geometric viewpoint can be combined with exact Hamiltonian preservation
\cite{KormannSonnendrucker2021,CamposPintoKormannSonnendrucker2022}.  More
recent explicit or nearly explicit approaches, including relativistic
$\pi$-PIC and explicit constrained energy-conserving updates, show that exact
energy balance can also be enforced by modified splittings or local work
constraints \cite{Gonoskov2024PiPIC,RicketsonHu2025Explicit}.

The present paper extends this energy-conserving viewpoint to the unstaggered
generalized-momentum PIC framework of Christlieb, Sands, and White.  Part I
introduced the potential-based generalized-momentum formulation for PIC in the
Lorenz gauge \cite{ChristliebSandsWhite2025PartI}.  Part II showed how the
Lorenz gauge can be enforced on co-located meshes by making the potential solve
compatible with the discrete continuity equation
\cite{ChristliebSandsWhite2024PartII}.  Part III developed a family of
unstaggered gauge-conserving methods and clarified the chain by which
discrete continuity gives Lorenz-gauge preservation, and Lorenz-gauge
preservation gives Gauss's law \cite{ChristliebSandsWhite2025PartIII}.  Those
papers established the potential formulation, the co-located field
representation, and the gauge/Gauss preservation mechanism.  The present work
adds the missing energy-conserving particle-field coupling by extending the
framework to a fully implicit, orbit-averaged formulation.

The main observation is simple.  The usual midpoint discretization of the
canonical momentum equation suggests evaluating $\grad\AV$ at a particle
midpoint.  This is second-order accurate, but it does not satisfy the
finite-difference chain rule for the mesh-interpolated vector potential
$\AV_h(x)=\sum_g \AV_g S_g(x)$ along a finite particle orbit.  The energy proof
needs exactly this chain rule.  We therefore replace the pointwise midpoint
derivative by an orbit-discrete-gradient.  The new object is a particle-side
derivative of the same interpolant $\AV_h$ used in $\PP=m\vv+q\AV_h$.  It is not
a new field solve, and it does not move the vector potential off the mesh.
Rather, it changes how a particle samples the mesh field along its own orbit.

The proposed method is an unstaggered Crank--Nicolson PIC scheme with a single
compatible particle-field exchange.  First, the current is deposited to the mesh
using orbit-averaged particle shapes.  Second, the charge density is advanced
from the discrete continuity equation.  Third, the scalar and vector potentials
are advanced by a Crank--Nicolson wave solve.  Finally, particles are advanced
in canonical momentum using an orbit-discrete-gradient of $\AV_h$.  When the
coupled nonlinear system is solved to tolerance, the discrete particle-energy
change is  the negative of the discrete field-energy change.  Thus the
method combines the continuity--gauge--Gauss structure of the unstaggered
potential formulation with the orbit-averaged work identity of modern energy-conserving~PIC.

This paper is organized around the main mathematical points.  \Cref{sec:model}
gives the non-dimensional, nonrelativistic potential model.
\Cref{sec:cn-gauge-gauss} introduces the unstaggered Crank--Nicolson field update
and states the gauge/Gauss preservation result.  \Cref{sec:energy-push}
contains the energy proof and the new particle push.  \Cref{sec:numerics}
presents numerical experiments that verify the identities and demonstrate the method. 
\Cref{sec:conclusion} gives a short summary.

\section{Nondimensional nonrelativistic potential model}
\label{sec:model}

This section fixes the model and notation.  The formulation follows the potential and generalized-momentum framework of \cite{ChristliebSandsWhite2025PartI}, but we restrict attention to the nonrelativistic setting in order to focus on the energy-conservation mechanism.  The nondimensionalization uses the same type of scales as in Appendix A of \cite{ChristliebSandsWhite2025PartI}: $x=L\tilde x$, $t=T\tilde t$, $v=(L/T)\tilde v$, $\phi=\phi_0\tilde\phi$, $\AV=A_0\tilde\AV$, with $\phi_0=ML^2/(QT^2)$ and $A_0=ML/(QT)$.  We drop tildes in what follows.

\subsection{Potential form of Maxwell's equations}
\label{subsec:potential-model}

Let $\phi$ be the scalar potential and $\AV$ be the vector potential.  In the Lorenz gauge, the nondimensional potential equations may be written as

\begin{subequations}
\label{eq:model-potentials}
\begin{align}
  \frac{1}{\kappaC^2}\partial_{tt}\phi - \Delta \phi &= \sigma_1 \rho,\label{eq:model-phi}\\
  \frac{1}{\kappaC^2}\partial_{tt}\AV - \Delta \AV &= \sigma_2 \JJ,\label{eq:model-A}\\
  \frac{1}{\kappaC^2}\partial_t\phi + \divg \AV &=0.\label{eq:model-gauge}
\end{align}
\end{subequations}
Here $\kappaC=cT/L$ is the nondimensional speed of light, while $\sigma_1$ and $\sigma_2$ are nondimensional source coefficients determined by the scaling.  For Debye-length and plasma-period normalizations used in the earlier generalized-momentum work, one often has $\sigma_1=1$ and $\sigma_2=\kappaC^{-2}$ in the electromagnetic wave equation followed by Maxwell scaling
\begin{equation}\label{eq:Maxwell_scaling}
    \sigma_1 = \kappaC^2 \sigma_2 .
\end{equation}
The algebraic conservation argument does not depend on this particular choice, but only on using Maxwell-compatible coefficients consistently in the field update and in the corresponding field energy. Other nondimensionalizations require the corresponding modifications to the field-energy weights and the Amp\`ere equation.

The electromagnetic fields are recovered from the potentials by
\begin{equation}
  \EE=-\grad\phi-\partial_t\AV,
  \qquad
  \BB=\curl\AV.
  \label{eq:E-B-cont}
\end{equation}
This representation gives $\divg \BB=0$ by construction.

\subsection{Canonical momentum particles}
\label{subsec:canonical-particles}

For a particle with mass $m_i$ and charge $q_i$, define the canonical momentum
\begin{equation}
  \PP_i=m_i\vv_i+q_i\AV(\xx_i,t).
  \label{eq:model-P-def}
\end{equation}
The nonrelativistic Hamiltonian is
\begin{equation}
  H_i(\xx_i,\PP_i,t)=\frac{1}{2m_i}\left|\PP_i-q_i\AV(\xx_i,t)\right|^2+q_i\phi(\xx_i,t).
  \label{eq:particle-H}
\end{equation}
Hamilton's equations give
\begin{subequations}
\label{eq:model-particles}
\begin{align}
  \dot{\xx}_i &= \vv_i = \frac{1}{m_i}\left(\PP_i-q_i\AV(\xx_i,t)\right),\label{eq:model-xdot}\\
  \dot{\PP}_i &= -q_i\grad\phi(\xx_i,t) + q_i\left(\grad\AV(\xx_i,t)\right)^T\vv_i.\label{eq:model-Pdot}
\end{align}
\end{subequations}
The key point is that the particle force contains only spatial derivatives of the potentials.  The time derivative of $\AV$ is not explicitly differenced in the particle push.  This was one of the motivations for the generalized-momentum PIC formulation in \cite{ChristliebSandsWhite2025PartI}.

\subsection{Mesh representation and source moments}
\label{subsec:mesh-representation}

Let $g$ denote a mesh index and let $S_g(\xx)$ be the periodic particle shape associated with mesh node or cell center $g$.  The mesh-interpolated vector potential seen by a particle is
\begin{equation}
  \AV_h^n(\xx)=\sum_g \AV_g^n S_g(\xx).
  \label{eq:Ah-mesh}
\end{equation}
The same shape functions are used to transfer particle information to the mesh and mesh information back to the particles.  In the original PIC formulation, source moments have the form
\begin{equation}
  \rho(\xx,t)=\sum_i q_i S(\xx-\xx_i(t)),
  \qquad
  \JJ(\xx,t)=\sum_i q_i\vv_i(t)S(\xx-\xx_i(t)).
  \label{eq:source-moments}
\end{equation}
In the method below, the current is deposited with orbit-averaged shapes, and the charge density is advanced from the continuity equation rather than deposited directly.  This is the same source-ordering principle used in the gauge-conserving methods of \cite{ChristliebSandsWhite2024PartII,ChristliebSandsWhite2025PartIII}.

\section{Unstaggered  field update and the gauge/Gauss structure}
\label{sec:cn-gauge-gauss}

This section states the field update independently of the particle push.  The main point is that the gauge and Gauss constraints are controlled by time consistency: the same temporal centering and the same spatial divergence used in the potential update must be used in the continuity update. Thus, charge conservation is used
as the mechanism that propagates the gauge and Gauss constraints. This was the central idea in the gauge-conserving papers \cite{ChristliebSandsWhite2024PartII,ChristliebSandsWhite2025PartIII}.  The presentation here is specialized to the Crank--Nicolson (CN) first-order potential system used for energy conservation.

\subsection{First-order potential system}
\label{subsec:first-order-potential-system}

Introduce
\begin{equation}
  \psi=\partial_t\phi,
  \qquad
  \UU=\partial_t\AV.
\end{equation}
Then the potential equations are written as
\begin{subequations}
\label{eq:first-order-system}
\begin{align}
  \partial_t\phi &= \psi,\label{eq:fo-phi}\\
  \partial_t\psi &= \kappaC^2\Delta\phi + \kappaC^2\sigma_1\rho,\label{eq:fo-psi}\\
  \partial_t\AV &= \UU,\label{eq:fo-A}\\
  \partial_t\UU &= \kappaC^2\Delta\AV + \kappaC^2\sigma_2\JJ.\label{eq:fo-U}
\end{align}
\end{subequations}
The electric and magnetic fields are evaluated from the evolved variables by
\begin{equation}
  \EE=-\grad\phi-\UU,
  \qquad
  \BB=\curl\AV.
  \label{eq:E-B-evolved}
\end{equation}

\subsection{Crank--Nicolson discretization}
\label{subsec:cn-discretization}

For any quantity $f$, define $f^{n+1/2}=(f^{n+1}+f^n)/2$.  The unstaggered CN update is
\begin{subequations}
\label{eq:cn-field-update}
\begin{align}
  \frac{\phi^{n+1}-\phi^n}{\dt} &= \psi^{n+1/2},\label{eq:cn-phi}\\
  \frac{\psi^{n+1}-\psi^n}{\dt} &= \kappaC^2\Delta_h\phi^{n+1/2}+\kappaC^2\sigma_1\rho^{n+1/2},\label{eq:cn-psi}\\
  \frac{\AV^{n+1}-\AV^n}{\dt} &= \UU^{n+1/2},\label{eq:cn-A}\\
  \frac{\UU^{n+1}-\UU^n}{\dt} &= \kappaC^2\Delta_h\AV^{n+1/2}+\kappaC^2\sigma_2\JJ^{n+1/2}.
  \label{eq:cn-U}
\end{align}
\end{subequations}
The midpoint Lorenz residual is
\begin{equation}
  g^{n+1/2}=\frac{1}{\kappaC^2}\psi^{n+1/2}+\divgh \AV^{n+1/2}.
  \label{eq:gauge-residual}
\end{equation}
The current is deposited first, and the charge is then advanced by
\begin{equation}
  \frac{\rho^{n+1}-\rho^n}{\dt}+\divgh \JJ^{n+1/2}=0.
  \label{eq:cn-continuity}
\end{equation}
This is the fully discrete analogue of the source treatment in \cite{ChristliebSandsWhite2024PartII,ChristliebSandsWhite2025PartIII}: the charge density is determined by continuity rather than by a direct charge scatter.

\subsection{Fourier form and constraint propagation}
\label{subsec:constraint-propagation}

For periodic problems, we use Fourier spectral derivatives.  For a fixed nonzero mode $\kk$, set
\begin{equation}
  \divgh \mapsto \ii \kk\cdot,
  \qquad
  \Delta_h \mapsto -|\kk|^2.
\end{equation}
Define the modal Lorenz and Gauss residuals
\begin{equation}
  g^n=\frac{1}{\kappaC^2}\Psi^n+\ii\kk\cdot \AV^n,
  \label{eq:modal-g}
\end{equation}
and
\begin{equation}
  r^n=\ii\kk\cdot \EE^n-\sigma_1 R^n,
  \qquad
  \EE^n=-\ii\kk\Phi^n-\UU^n.
  \label{eq:modal-r}
\end{equation}
A direct substitution of the CN update and the continuity update gives
\begin{subequations}
\label{eq:g-r-oscillator}
\begin{align}
  \frac{g^{n+1}-g^n}{\dt} &= -r^{n+1/2},\label{eq:g-evol}\\
  \frac{r^{n+1}-r^n}{\dt} &= \kappaC^2|\kk|^2 g^{n+1/2}.
  \label{eq:r-evol}
\end{align}
\end{subequations}
Thus the constraint pair $(g,r)$ is advanced by a homogeneous Crank--Nicolson discretization of an oscillator.  The source terms cancel by the consistency of the charge and vector-potential updates: both use the same current and discrete divergence operator, together with the Maxwell-compatible scaling relation \eqref{eq:Maxwell_scaling}.

\begin{theorem}[Gauge and Gauss preservation]
\label{thm:gauge-gauss}
Assume the scaling relation \eqref{eq:Maxwell_scaling}, periodic boundary conditions and Fourier spectral spatial operators.  Suppose the discrete continuity equation \eqref{eq:cn-continuity} is satisfied, and suppose the initial data satisfy the Lorenz and Gauss constraints mode by mode.  Then the CN update \eqref{eq:cn-field-update} preserves the Lorenz gauge and Gauss's law for every resolved Fourier mode, up to nonlinear solver tolerance and roundoff.
\end{theorem}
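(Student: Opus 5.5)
We work mode by mode in Fourier space and reduce the theorem to the homogeneous oscillator \eqref{eq:g-r-oscillator}. First we verify those identities directly. Fix a nonzero mode $\kk$ and write $\Phi,\Psi,\AV,\UU,R,\JJ$ for the modal coefficients. For the Lorenz residual, apply $\frac{1}{\kappaC^2}$ to \eqref{eq:cn-psi} and $\ii\kk\cdot$ to \eqref{eq:cn-A}. This gives
\begin{equation*}
\frac{g^{n+1}-g^n}{\dt}=-|\kk|^2\Phi^{n+1/2}+\sigma_1R^{n+1/2}+\ii\kk\cdot\UU^{n+1/2}.
\end{equation*}
Since $\ii\kk\cdot\EE^{n+1/2}=|\kk|^2\Phi^{n+1/2}-\ii\kk\cdot\UU^{n+1/2}$, the right-hand side equals $-r^{n+1/2}$. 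The residuals $g$ and $r$ are linear in the state, so averaging commutes with them.

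For the Gauss residual, differentiate $r^n=|\kk|^2\Phi^n-\ii\kk\cdot\UU^n-\sigma_1R^n$ in discrete time, using \eqref{eq:cn-phi}, \eqref{eq:cn-U} and \eqref{eq:cn-continuity}:
\begin{equation*}
\frac{r^{n+1}-r^n}{\dt}=|\kk|^2\Psi^{n+1/2}+\kappaC^2|\kk|^2\,\ii\kk\cdot\AV^{n+1/2}-\kappaC^2\sigma_2\,\ii\kk\cdot\JJ^{n+1/2}+\sigma_1\,\ii\kk\cdot\JJ^{n+1/2}.
\end{equation*}
The two current terms cancel exactly by \eqref{eq:Maxwell_scaling}. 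This cancellation is the only place where the scaling relation and the source ordering enter: the same $\JJ^{n+1/2}$ and the same spectral divergence must appear in both updates. What remains is $\kappaC^2|\kk|^2 g^{n+1/2}$.

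The main step is then to show that the homogeneous CN oscillator maps zero data to zero. Writing $a=\dt/2$, the system reads
\begin{equation*}
\begin{pmatrix}1 & a\\ -a\kappaC^2|\kk|^2 & 1\end{pmatrix}\begin{pmatrix}g^{n+1}\\ r^{n+1}\end{pmatrix}
=\begin{pmatrix}1 & -a\\ a\kappaC^2|\kk|^2 & 1\end{pmatrix}\begin{pmatrix}g^n\\ r^n\end{pmatrix}.
\end{equation*}
The left matrix has determinant $1+a^2\kappaC^2|\kk|^2>0$, so it is invertible for every $\dt$. The one-step map is therefore linear and well defined, and $(g^n,r^n)=(0,0)$ forces $(g^{n+1},r^{n+1})=(0,0)$. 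Induction on $n$ from the assumed initial constraints gives preservation for all $n$.

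Alternatively, one can show that $\kappaC^2|\kk|^2|g|^2+|r|^2$ is exactly conserved, by taking inner products of \eqref{eq:g-evol} with $\kappaC^2|\kk|^2\overline{g^{n+1/2}}$ and of \eqref{eq:r-evol} with $\overline{r^{n+1/2}}$. This has the advantage of bounding any residual growth when $(g,r)$ is nonzero.

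Two edge points need checking.

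\textbf{The mode $\kk=0$.} Here \eqref{eq:r-evol} gives $r$ constant and \eqref{eq:g-evol} gives $g$ affine in $n$. Both vanish if they vanish initially, since $r^0=0$ makes $g$ constant. This mode should be handled separately, or excluded as in the statement's ``nonzero mode'' convention.

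\textbf{Inexact solves.} When the nonlinear solve is not exact, the continuity equation and the CN relations hold only up to a residual $\epsilon^n$. The oscillator then acquires a forcing of size $O(\epsilon)$. By the conserved quadratic quantity above, the constraint residual grows at most by the accumulated tolerance, which is the meaning of ``up to nonlinear solver tolerance and roundoff''.
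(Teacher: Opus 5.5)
Your proposal is correct and takes the same route as the paper: reduce to the homogeneous Crank--Nicolson oscillator \eqref{eq:g-r-oscillator} for $(g,r)$ and propagate zero data. Your explicit substitution and the invertibility of the implicit matrix (determinant $1+\frac{\dt^2}{4}\kappaC^2|\kk|^2>0$) make rigorous the step the paper compresses into ``linear and homogeneous''; the one item in the paper's proof you omit is the magnetic Gauss law, which holds identically because $\BB=\curlh\AV$ and $\divgh\curlh=0$ for the spectral operators.
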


\begin{proof}
For each nonzero Fourier mode, the residuals $g^n$ and $r^n$ satisfy \eqref{eq:g-r-oscillator}.  This system is linear and homogeneous.  Therefore $(g^0,r^0)=(0,0)$ implies $(g^n,r^n)=(0,0)$ for every later time level.  For the zero mode, the periodic problem requires net-neutral charge and a fixed mean for the scalar potential.  The magnetic Gauss law is satisfied because $\BB=\curlh\AV$ and $\divgh\curlh=0$ in the spectral discretization.
\end{proof}

The theorem is not new in spirit; it is the CN specialization of the time-consistency idea developed in \cite{ChristliebSandsWhite2024PartII,ChristliebSandsWhite2025PartIII}.  The important point for the present paper is that the field update already gives the correct gauge/Gauss structure.  The remaining issue is the particle coupling needed for energy conservation.

\section{Energy conservation and the orbit-discrete-gradient  push}
\label{sec:energy-push}

This section contains the key new mathematical development in this work.  The method
presented thus far inherits the main structural properties of the previous unstaggered
potential formulations: the current is mapped to the mesh, the charge is then advanced by a
consistent discretization of the continuity equation, and this current-then-continuity
construction gives the time consistency needed to preserve the Lorenz gauge and Gauss's
law.  That observation is the foundation of the gauge-conserving methods in the earlier
papers.  Our goal here is to add one more structure, namely exact discrete conservation of
total energy.

The new point is that energy conservation requires more than a time-consistent field solve
and more than using the same particle shape in scatter and gather.  In the generalized
momentum formulation, the particle force contains only spatial derivatives of the potentials.
At the continuous level this is possible because
\begin{equation}
  \frac{\dd}{\dd t}\AV(\xx_i(t),t)
  =\partial_t\AV(\xx_i(t),t)
  +\grad \AV(\xx_i(t),t) \dot{\xx}_i(t).
  \label{eq:continuous-A-chain-rule}
\end{equation}
Equivalently, along the particle orbit,
\begin{equation}
  \AV(\xx_i^{n+1},t^{n+1})-\AV(\xx_i^n,t^n)
  =\int_{t^n}^{t^{n+1}}\partial_t\AV(\xx_i(t),t)\dd t
  +\int_{t^n}^{t^{n+1}}\grad\AV(\xx_i(t),t)\,\vv_i(t)\dd t .
  \label{eq:continuous-A-integral-chain-rule}
\end{equation}
In the discrete potential method we evolve the auxiliary variable
$\UU=\partial_t\AV$, and the Crank--Nicolson update provides the midpoint identity
\begin{equation}
  \frac{\AV_g^{n+1}-\AV_g^n}{\dt}=\UU_g^{n+1/2},
  \qquad
  \UU_g^{n+1/2}=\frac{1}{2}\left(\UU_g^{n+1}+\UU_g^n\right).
  \label{eq:U-midpoint-definition-intro}
\end{equation}
Thus no new staggered value of $\UU$ is introduced.  The time part of
\eqref{eq:continuous-A-integral-chain-rule} is already built into the CN field update.  The
spatial part is more delicate.  A pointwise midpoint approximation to
$\int \grad\AV(\xx_i(t),t)\vv_i(t)\dd t$ is second-order accurate, but it does not in
general satisfy the finite-difference chain rule for the mesh-interpolated field sampled by
the particle.  This is the missing piece in a direct midpoint implementation.

The purpose of this section is to build a particle push that makes the discrete chain rule an
identity.  We first define the total discrete energy.  We then introduce an orbit-averaged
scatter/gather operator, because the same particle orbit must be used both in the current map
and in the field gather.  Next we define an orbit-discrete-gradient of the mesh-interpolated
vector potential.  This derivative is not a gathered spectral gradient; it is the derivative of the
same interpolant that appears in the generalized momentum relation.  With this construction,
the particle energy balance reduces exactly to the mesh work term.  The CN field update
then gives the opposite field-energy balance, and total energy conservation follows.

\subsection{Discrete energy}
\label{subsec:discrete-energy}

Let $S_g(\xx)$ denote the particle shape associated with mesh point $g$.  The vector
potential seen by a particle is the mesh interpolant \eqref{eq:Ah-mesh}
\begin{equation*}
  \AV_h^n(\xx)=\sum_g \AV_g^n S_g(\xx).
\end{equation*}
This is the same gather used in the definition of the generalized momentum.  At integer time
levels we write
\begin{equation}
  m_i\vv_i^n=\PP_i^n-q_i\AV_h^n(\xx_i^n).
  \label{eq:discrete-canonical-relation}
\end{equation}
The particle kinetic energy is
\begin{equation}
  K^n=\sum_i \frac{1}{2}m_i|\vv_i^n|^2.
  \label{eq:particle-energy}
\end{equation}
The field energy is defined by
\begin{equation}
  W^n=
  \sum_g
  \left(
    \frac{1}{2\sigma_1}|\EE_g^n|^2
    +
    \frac{1}{2\sigma_2}|\BB_g^n|^2
  \right)\dV,
  \label{eq:field-energy}
\end{equation}
where
\begin{equation}
  \EE_g^n=-\grad_h\phi_g^n-\UU_g^n,
  \qquad
  \BB_g^n=\curlh\AV_g^n.
  \label{eq:fields-from-potentials-discrete}
\end{equation}
The constants $\sigma_1$ and $\sigma_2$ are the nondimensional coefficients appearing in the
field equations.  The total energy is
\begin{equation}
  \Etot^n=K^n+W^n.
  \label{eq:total-discrete-energy}
\end{equation}
The exact discrete theorem $\Etot^{n+1}=\Etot^n$ is stated for exact orbit integrals; in computations, the residual is limited by solver tolerance, quadrature error, and roundoff.

\subsection{The particle orbit and the orbit-averaged shape}
\label{subsec:orbit-shape}

Let the particle displacement over one time step be
\begin{equation}
  \Delta \xx_i=\xx_i^{n+1}-\xx_i^n,
  \qquad
  \vbar_i
  =\frac{1}{2}\left(\vv_i^{n+1}+\vv_i^n\right).
  \label{eq:orbit-vbar}
\end{equation}
In a periodic domain, $\Delta\xx_i$ is understood as an unwrapped displacement.  The
position is wrapped back to the periodic mesh only when evaluating shape functions.  The
straight orbit used by the time-centered particle update is
\begin{equation}
  \xx_i(s)=\xx_i^n+s\Delta \xx_i,
  \qquad 0\le s\le 1.
  \label{eq:particle-orbit}
\end{equation}
For each particle and mesh point we define the orbit-averaged shape (similarly to what was done in \cite{chen2023implicit} in the strongly magnetized context):
\begin{equation}
  \Sbar_{ig}=\int_0^1 S_g(\xx_i(s))\dd s.
  \label{eq:orbit-average-shape}
\end{equation}
The current used in the field solve is then
\begin{equation}
  \JJ_g^{n+1/2}=\frac{1}{\dV}\sum_i q_i\vbar_i\Sbar_{ig}.
  \label{eq:orbit-current}
\end{equation}
This is the current associated with the particle orbit, not an average of two endpoint
currents.

The same orbit weights are used to gather midpoint mesh quantities.  For any mesh vector
field $\bm{G}_g$, define
\begin{equation}
  \overline{\bm{G}}_i=\sum_g \bm{G}_g\Sbar_{ig}.
  \label{eq:orbit-gather-general}
\end{equation}
Then the scatter and gather operators are adjoint in the only sense needed by the energy
proof:
\begin{equation}
  \sum_i q_i\vbar_i\cdot\overline{\bm{G}}_i
  =
  \sum_g \JJ_g^{n+1/2}\cdot\bm{G}_g\dV.
  \label{eq:deposit-gather-identity}
\end{equation}
This identity is the mesh version of particle work.  It will be applied with
$\bm{G}_g=\EE_g^{n+1/2}$.  The exact first-order orbit weights for scatter and gather are in Appendix A.   

\subsection{The discrete chain rule for the mesh-interpolated vector potential}
\label{subsec:chain-rule}

The orbit average in \eqref{eq:orbit-average-shape} fixes the power balance between
particles and the mesh.  We now fix the other part of the energy argument, namely the chain
rule for $\AV_h$ along the orbit.  Define the linear interpolation of the mesh values in the
step by
\begin{equation}
  \AV_g(s)=(1-s)\AV_g^n+s\AV_g^{n+1}.
  \label{eq:A-g-s}
\end{equation}
The mesh-interpolated vector potential along the particle path is
\begin{equation}
  \mathcal A_i(s)=\sum_g \AV_g(s)S_g(\xx_i(s)).
  \label{eq:Acal-s}
\end{equation}
The CN field update gives
\begin{equation}
  \AV_g^{n+1}-\AV_g^n=\dt\,\UU_g^{n+1/2},
  \qquad
  \UU_g^{n+1/2}=\frac{1}{2}\left(\UU_g^{n+1}+\UU_g^n\right).
  \label{eq:A-U-CN}
\end{equation}
Using the same orbit weights as above, define
\begin{equation}
  \Ubar_i=\sum_g \UU_g^{n+1/2}\Sbar_{ig}.
  \label{eq:Ubar-def}
\end{equation}
The new  object is the orbit-discrete-gradient matrix $\DA_i\in\mathbb R^{d\times d}$,
with entries
\begin{equation}
  [\DA_i \AV]_{\ell j}
  =
  \sum_g \int_0^1
  A_{\ell,g}(s)\,\partial_{x_j}S_g(\xx_i(s))\dd s.
  \label{eq:DA-def}
\end{equation}
Here $\ell$ indexes the vector component of $\AV$ and $j$ indexes the spatial derivative.
This is the derivative of the particle interpolant \eqref{eq:Ah-mesh} along the orbit.

\begin{lemma}[Exact orbit chain rule]
\label{lem:exact-chain-rule}
The definitions \eqref{eq:orbit-average-shape}, \eqref{eq:Ubar-def}, and \eqref{eq:DA-def}
imply
\begin{equation}
  \AV_h^{n+1}(\xx_i^{n+1})-
  \AV_h^n(\xx_i^n)
  =
  \dt\,\Ubar_i+
  \dt\,(\DA_i\AV)\vbar_i.
  \label{eq:exact-chain-rule}
\end{equation}
\end{lemma}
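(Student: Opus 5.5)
The plan is to apply the fundamental theorem of calculus to the scalar-parameter function $\mathcal A_i(s)$ defined in \eqref{eq:Acal-s} on $[0,1]$. First I will check the endpoint values. At $s=0$ we have $\AV_g(0)=\AV_g^n$ and $\xx_i(0)=\xx_i^n$, so $\mathcal A_i(0)=\sum_g\AV_g^nS_g(\xx_i^n)=\AV_h^n(\xx_i^n)$ by \eqref{eq:Ah-mesh}. Likewise $\mathcal A_i(1)=\AV_h^{n+1}(\xx_i^{n+1})$. Because the shape functions are periodic, evaluating them at the unwrapped orbit point agrees with evaluating them at the wrapped position. Hence the left-hand side of \eqref{eq:exact-chain-rule} equals $\mathcal A_i(1)-\mathcal A_i(0)=\int_0^1\mathcal A_i'(s)\dd s$.

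Next I will differentiate $\mathcal A_i(s)$ with the product rule. This gives, componentwise,
\begin{equation*}
  \mathcal A_{i,\ell}'(s)=\sum_g\bigl(A_{\ell,g}^{n+1}-A_{\ell,g}^n\bigr)S_g(\xx_i(s))
  +\sum_g A_{\ell,g}(s)\sum_j\partial_{x_j}S_g(\xx_i(s))\,\Delta x_{i,j},
\end{equation*}
using $\frac{\dd}{\dd s}\AV_g(s)=\AV_g^{n+1}-\AV_g^n$ and $\frac{\dd}{\dd s}\xx_i(s)=\Delta\xx_i$. Integrating the first sum over $s\in[0,1]$ and inserting the CN relation \eqref{eq:A-U-CN} gives $\dt\sum_g\UU_g^{n+1/2}\Sbar_{ig}=\dt\,\Ubar_i$, by \eqref{eq:orbit-average-shape} and \eqref{eq:Ubar-def}. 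In the second sum, $\Delta\xx_i$ does not depend on $s$ and can be pulled out of the integral. What remains is exactly the matrix \eqref{eq:DA-def}, so this term equals $(\DA_i\AV)\Delta\xx_i$. The last step is to write $\Delta\xx_i=\dt\,\vbar_i$. This is the time-centered position update that defines the straight orbit \eqref{eq:particle-orbit} with \eqref{eq:orbit-vbar}. I will state it explicitly as part of the particle push that the lemma presupposes.

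The main obstacle is regularity: the fundamental theorem of calculus and the product rule must be justified for the piecewise-polynomial shapes $S_g$. These are not $C^1$ at cell boundaries for low-order splines. I will assume, as the definition \eqref{eq:DA-def} already does implicitly, that each $S_g$ is at least Lipschitz (for example, a B-spline of order $\ge1$). Then $s\mapsto S_g(\xx_i(s))$ is absolutely continuous, and its derivative exists for almost every $s$ and equals $\grad S_g(\xx_i(s))\cdot\Delta\xx_i$. The product of this function with the affine function $\AV_g(s)$ is again absolutely continuous, and the sum over $g$ is finite. Therefore the integral identity holds with no smoothness beyond that, and \eqref{eq:exact-chain-rule} is an algebraic identity, valid for exact evaluation of the orbit integrals.
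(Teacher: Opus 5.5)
Your argument is the paper's proof: apply the fundamental theorem of calculus to $\mathcal A_i(s)$ on $[0,1]$, differentiate with the product rule, then use \eqref{eq:A-U-CN} on the time part and $\Delta\xx_i=\dt\,\vbar_i$ on the spatial part; you are right that the latter is the position update \eqref{eq:orbit-x-update}, not one of the cited definitions. Your regularity remark goes beyond the paper and is fine for the B-spline shapes actually used, but Lipschitz continuity alone would not justify $\frac{\dd}{\dd s}S_g(\xx_i(s))=\grad S_g(\xx_i(s))\cdot\Delta\xx_i$, because a fixed line is a null set, to which Rademacher's theorem does not apply; the justification that works is that $s\mapsto S_g(\xx_i(s))$ is continuous and piecewise polynomial with breaks only at knot crossings, the same structure behind the paper's split-path quadrature.
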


\begin{proof}
Differentiate \eqref{eq:Acal-s} with respect to $s$:
\begin{equation}
  \frac{\dd \mathcal A_i}{\dd s}
  =
  \sum_g (\AV_g^{n+1}-\AV_g^n)S_g(\xx_i(s))
  +
  \sum_g \AV_g(s)\grad S_g(\xx_i(s)) \cdot \Delta\xx_i.
\end{equation}
Using \eqref{eq:A-U-CN} and $\Delta\xx_i=\dt\,\vbar_i$, and integrating from $s=0$ to
$s=1$, gives \eqref{eq:exact-chain-rule}.
\end{proof}

This lemma is the main construction.  It replaces the chain-rule assumption in the formal
energy argument by an identity of the particle-mesh discretization.  Notice that it does not
change the mesh field.  The field $\AV_g$ still lives on the mesh and is advanced by the CN
field solve.  What changes is the particle-side derivative used in the momentum equation.  For a first order particle weighting, Appendix B gives the exact orbit averaged gradient.   

\subsection{The energy-consistent canonical momentum update}
\label{subsec:energy-consistent-push}
We now explain how we derive  the update for \eqref{eq:model-particles}, the canonical momentum, along the discreet  orbit \eqref{eq:orbit-vbar}.  
As noted in the last section, the purpose of the orbit-discrete-gradient construction is to replace the
pointwise quantity $\nabla\AV(\xx_i,t)$ 
by a particle-orbit derivative of the same mesh interpolant that appears in the generalized momentum relation with an approximation consistent with the discreet chain rule along the orbit.  As discussed in the last section, this distinction is important.   For the force coming from the scalar potential, we use a gather that makes use of the same orbit-averaged shape as the current deposition:
\begin{equation}
  \gphibar_i
  =
  \sum_g
  \left(\nabla_h\phi_g^{n+1/2}\right)
  \overline S_{ig},
  \qquad
  \overline S_{ig}
  =
  \int_0^1 S_g(\xx_i(s))\,ds .
  \label{eq:gphibar-definition}
\end{equation}
For the gradient of the vector potential, 
 the consistent orbit averaged component that appears in the $j$-th equation for the canonical
momentum is
$$
\begin{aligned}
   ((\nabla \AV)^T\vv_i)_{ j} \approx  \left[
    \left(\DA_i \AV\right)^T
    \vbar_i
    \right]_j
    &=
    \sum_{\ell=1}^d
    \left[
    \left(\DA_i\AV\right)^T
    \right]_{j\ell}
    \overline v_{i,\ell} =
    \sum_{\ell=1}^d
    \left[
    \DA_i\AV
    \right]_{\ell j}
    \overline v_{i,\ell} \\
    &=
    \sum_{\ell=1}^d
    \sum_g
    \int_0^1
    A_{\ell,g}(s)\,
    \partial_{x_j}S_g(\xx_i(s))\,ds\,
    \overline v_{i,\ell}.
\end{aligned}
$$
The resulting orbit-discrete-gradient particle push is
\begin{subequations}
\label{eq:orbit-push}
\begin{align}
  \vbar_i
  &=
  \frac{1}{2}\left(\vv_i^{n+1}+\vv_i^n\right),
  \label{eq:orbit-vbar-update}\\
  \xx_i^{n+1}
  &=
  \xx_i^n+\dt\,\vbar_i,
  \label{eq:orbit-x-update}\\
  \PP_i^{n+1}
  &=
  \PP_i^n+
  \dt\left[-q_i\gphibar_i+q_i(\DA_i \AV)^T\vbar_i\right],
  \label{eq:orbit-P-update}\\
  m_i\vv_i^{n+1}
  &=
  \PP_i^{n+1}-q_i\AV_h^{n+1}(\xx_i^{n+1}).
  \label{eq:orbit-v-update}
\end{align}
\end{subequations}
This is not the same as using a pointwise midpoint gather of $\grad\AV$.  The matrix
$\DA_i \AV$ is built from the derivative of the same interpolant $\AV_h$ that appears in the
canonical momentum relation.  This consistency is what makes the discrete chain rule exact and will give point wise energy conservation.

\subsection{Particle energy balance}
\label{subsec:particle-energy-balance}

Subtract \eqref{eq:discrete-canonical-relation} at two time levels:
\begin{equation}
  m_i(\vv_i^{n+1}-\vv_i^n)
  =
  (\PP_i^{n+1}-\PP_i^n)
  -q_i\left[
    \AV_h^{n+1}(\xx_i^{n+1})-
    \AV_h^n(\xx_i^n)
  \right].
  \label{eq:mechanical-momentum-difference}
\end{equation}
Dot with $\vbar_i$ and sum over particles.  Since
\begin{equation}
  m_i\vbar_i\cdot(\vv_i^{n+1}-\vv_i^n)
  =
  \frac{m_i}{2}\left(|\vv_i^{n+1}|^2-|\vv_i^n|^2\right),
\end{equation}
we obtain
\begin{equation}
  K^{n+1}-K^n
  =
  \sum_i \vbar_i\cdot(\PP_i^{n+1}-\PP_i^n)
  -
  \sum_i q_i\vbar_i\cdot
  \left[
    \AV_h^{n+1}(\xx_i^{n+1})-
    \AV_h^n(\xx_i^n)
  \right].
  \label{eq:kinetic-change-pre-chain}
\end{equation}
We next rewrite \eqref{eq:kinetic-change-pre-chain} in a form close to what we need to establish energy conservation by
using the momentum update \eqref{eq:orbit-P-update} and the discrete chain rule \eqref{eq:exact-chain-rule}.  What we are doing is replacing $(\PP_i^{n+1}-\PP_i^n)$ with the right hand side of the particle push as well as replacing  $\left(
    \AV_h^{n+1}(\xx_i^{n+1})-
    \AV_h^n(\xx_i^n)
  \right)$ 
with the right hand side of the discrete chain rule.  Making use of the following identity allows for the cancellation the terms $\Delta t \vbar_i\cdot (\DA_i {\bf A})^T\vbar_i$ coming from the discreet chain rule and  $\Delta t \vbar_i\cdot\DA_i \vbar_i$ coming form the particle push,
\begin{equation}
  \vbar_i\cdot (\DA_i \AV)^T\vbar_i
  =
  ((\DA_i \AV)\vbar_i)\cdot\vbar_i
  =
  \vbar_i\cdot(\DA_i \AV)\vbar_i~,~
  \label{eq:DA-cancellation}
\end{equation}
leaves us with the following, that the difference in the total kinetic energy is 
\begin{equation}
  K^{n+1}-K^n
  =
  \dt\sum_i q_i\vbar_i\cdot\left(-\gphibar_i-\Ubar_i\right).
  \label{eq:particle-energy-work-particle}
\end{equation}
If we define the orbit-averaged electric field as
\begin{equation}
  \overline{\EE}_i
  =
  \sum_g \EE_g^{n+1/2}\Sbar_{ig}
  =
  -\gphibar_i-\Ubar_i,
  \label{eq:orbit-E-gather}
\end{equation}
then
\begin{equation}
  K^{n+1}-K^n
  =
  \dt\sum_i q_i\vbar_i\cdot\overline{\EE}_i.
  \label{eq:particle-energy-work-particle-final}
\end{equation}
Using the deposit/gather identity \eqref{eq:deposit-gather-identity}, the particle energy
change is written entirely on the mesh:
\begin{equation}
  K^{n+1}-K^n
  =
  \dt\sum_g \JJ_g^{n+1/2}\cdot\EE_g^{n+1/2}\dV.
  \label{eq:particle-energy-final}
\end{equation}
This definition for the orbit-averaged electric field was not arbitrary, it is motivated by the orbit averaged particle map for $\JJ$ and the CN solution for the fields.  The next section completes the construction by showing the  differences of the total mesh field energy  at time $t^{n+1}$ and $t^n$ is the right hand side of equation \ref{eq:particle-energy-final}, which will allow us to establish $\Etot^{n+1}=\Etot^n$ at a fully discreet level.

\subsection{Field energy balance and total energy conservation}
\label{subsec:field-energy-balance}

The CN potential update and the midpoint Lorenz gauge imply the CN Maxwell form
\begin{subequations}
\label{eq:cn-maxwell}
\begin{align}
  \frac{\BB^{n+1}-\BB^n}{\dt}
  &=-\curlh\EE^{n+1/2},
  \label{eq:cn-faraday}\\
  \frac{1}{\sigma_1}\frac{\EE^{n+1}-\EE^n}{\dt}
  &=
  \frac{1}{\sigma_2}\curlh\BB^{n+1/2}
  -\JJ^{n+1/2}.
  \label{eq:cn-ampere}
\end{align}
\end{subequations}
The derivation uses the scaling relation \eqref{eq:Maxwell_scaling}, the definitions \eqref{eq:fields-from-potentials-discrete},  the CN updates for $(\phi,\psi,\AV,\UU)$, and the midpoint Lorenz gauge constraint.  Taking the
discrete inner product of \eqref{eq:cn-ampere} with $\EE^{n+1/2}$ and
\eqref{eq:cn-faraday} with $\BB^{n+1/2}/\sigma_2$, adding, and using summation by parts
for the periodic spectral operators gives
\begin{equation}
  W^{n+1}-W^n
  =
  -\dt\sum_g \JJ_g^{n+1/2}\cdot\EE_g^{n+1/2}\dV.
  \label{eq:field-energy-final}
\end{equation}
Combining \eqref{eq:particle-energy-final} and \eqref{eq:field-energy-final} gives the
main conservation statement.

\begin{theorem}[Discrete total energy conservation]
\label{thm:energy-conservation}
Assume periodic boundary conditions, summation-by-parts spectral operators, the continuity
update \eqref{eq:cn-continuity}, the orbit current \eqref{eq:orbit-current}, the
orbit-discrete-gradient particle push \eqref{eq:orbit-push}, and a fully converged coupled
CN nonlinear solve.  If the orbit integrals are evaluated exactly, then
\begin{equation}
  \Etot^{n+1}=\Etot^n.
\end{equation}
In a computation, the equality holds up to nonlinear solver tolerance, quadrature error in the
orbit integrals, and floating-point roundoff.  Exact first order orbit weights are in Appendix A.
\end{theorem}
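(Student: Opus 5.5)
The proof adds the particle and field energy balances. The particle half is essentially done in the preceding subsections, so the real work is on the field side: establishing the CN Maxwell form \eqref{eq:cn-maxwell} and the resulting energy identity \eqref{eq:field-energy-final}.

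\emph{Particle side.} For each particle, the momentum update \eqref{eq:orbit-P-update} and Lemma~\ref{lem:exact-chain-rule} hold exactly when the orbit integrals are exact. So I will substitute both into \eqref{eq:kinetic-change-pre-chain}. The quadratic terms cancel by \eqref{eq:DA-cancellation}, which gives \eqref{eq:particle-energy-work-particle}. Linearity of the orbit gather identifies $-\gphibar_i-\Ubar_i$ with $\overline{\EE}_i$ in \eqref{eq:orbit-E-gather}, using $\EE^{n+1/2}_g=-\grad_h\phi_g^{n+1/2}-\UU_g^{n+1/2}$. The adjointness \eqref{eq:deposit-gather-identity} is just a reordering of the double sum over $i,g$ using the definition \eqref{eq:orbit-current}. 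Together these give \eqref{eq:particle-energy-final}. This half needs only that the same $\Sbar_{ig}$ is used in the deposit and in the gather, and that the converged solve makes all relations hold simultaneously.

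\emph{Field side.} I work mode by mode in Fourier space, where $\grad_h\mapsto\ii\kk$ and $\curlh\mapsto\ii\kk\times$.

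Faraday's law \eqref{eq:cn-faraday} is immediate: apply $\curlh$ to \eqref{eq:cn-A}, and use $\curlh\grad_h=0$ to insert $-\grad_h\phi^{n+1/2}$ into $\EE^{n+1/2}$.

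For Amp\`ere's law \eqref{eq:cn-ampere}:
\begin{itemize}
\item Difference $\EE=-\grad_h\phi-\UU$, using \eqref{eq:cn-phi} and \eqref{eq:cn-U}. This gives $(\EE^{n+1}-\EE^n)/\dt=-\grad_h\psi^{n+1/2}-\kappaC^2\Delta_h\AV^{n+1/2}-\kappaC^2\sigma_2\JJ^{n+1/2}$.
\item Expand $\curlh\curlh\AV=\grad_h\divgh\AV-\Delta_h\AV$.
\item By Theorem~\ref{thm:gauge-gauss}, the Lorenz residual vanishes at both time levels, hence at the midpoint. This gives $\grad_h\psi^{n+1/2}=-\kappaC^2\grad_h\divgh\AV^{n+1/2}$.
\item Therefore $(\EE^{n+1}-\EE^n)/\dt=\kappaC^2\curlh\BB^{n+1/2}-\kappaC^2\sigma_2\JJ^{n+1/2}$.
\item Divide by $\sigma_1=\kappaC^2\sigma_2$ to get \eqref{eq:cn-ampere}.
\end{itemize}
Theorem~\ref{thm:gauge-gauss} in turn needs the continuity update \eqref{eq:cn-continuity} and constraint-satisfying initial data, which I take as part of the hypotheses.

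\emph{Energy identity.} Take the real discrete inner product of \eqref{eq:cn-ampere} with $\EE^{n+1/2}\dV$ and of \eqref{eq:cn-faraday} with $\sigma_2^{-1}\BB^{n+1/2}\dV$. The time-difference terms telescope, since $(a^{n+1}-a^n)\cdot(a^{n+1}+a^n)/2=(|a^{n+1}|^2-|a^n|^2)/2$, and this produces exactly $(W^{n+1}-W^n)/\dt$. The curl terms cancel by the summation-by-parts identity $\langle\curlh\BB,\EE\rangle=\langle\BB,\curlh\EE\rangle$, which holds for spectral operators on a periodic grid by Parseval and the fact that $\ii\kk\times$ is skew-Hermitian. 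What remains is \eqref{eq:field-energy-final}.

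Adding \eqref{eq:particle-energy-final} and \eqref{eq:field-energy-final} gives $\Etot^{n+1}=\Etot^n$.

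\emph{Main obstacle.} I expect the hard part to be the careful use of the gauge constraint in deriving \eqref{eq:cn-ampere}, with the zero mode handled separately. For $\kk=0$ the curl terms vanish, and the mean of $\EE$ evolves by $-\sigma_1$ times the mean current. This zero-mode contribution to \eqref{eq:field-energy-final} must be kept, and it still balances the particle side because \eqref{eq:deposit-gather-identity} includes all modes. I would therefore verify \eqref{eq:cn-ampere} including the mean component rather than discard it.
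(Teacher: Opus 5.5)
Your proposal is correct and follows the paper's own argument. You combine the particle balance \eqref{eq:particle-energy-final}, obtained from the push, the exact orbit chain rule, \eqref{eq:DA-cancellation} and the deposit/gather adjointness, with the field balance \eqref{eq:field-energy-final}. You obtain the latter, as the paper does, from the CN Maxwell form via the CN potential update, the midpoint Lorenz gauge (where you rightly note the need for constraint-satisfying initial data) and $\sigma_1=\kappaC^2\sigma_2$, followed by inner products and summation by parts; your zero-mode check is a useful detail the paper leaves implicit.

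One small correction: the symbol $\ii\kk\times$ is Hermitian, not skew-Hermitian, since $\kk\times$ is real skew-symmetric and multiplying by $\ii$ makes it Hermitian. It is exactly this self-adjointness that gives $\langle\curlh\BB,\EE\rangle=\langle\BB,\curlh\EE\rangle$; a skew-Hermitian symbol would flip the sign, and the curl terms would then not cancel.
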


\subsection{Cell boundaries and conservation}
\label{sec:cell-boundaries-conservation}

The preceding proof assumes that the orbit integrals defining $\Sbar_{ig}$, $\Ubar_i$, and
$\DA_i\AV$ are exact.  This is an implementation point, but it is not a minor one.  A B-spline is
a compactly supported piecewise polynomial.  Along a particle path the active polynomial
branch changes whenever the path crosses a spline knot.  A Gauss rule applied over the whole
interval $0\le s\le 1$ is exact only if the path stays in one polynomial patch.  If a particle
crosses a cell boundary, or more precisely a spline knot, then the path integral must be split
at the crossing.  This is also true if one does the orbit integral exact.  

A one-dimensional test makes this visible.  Prescribe $A^n$ and $A^{n+1}$ on a periodic
mesh and measure only the chain-rule residual
\begin{equation}
  R_{A,i}
  =
  A_h^{n+1}(x_i^{n+1})-
  A_h^n(x_i^n)
  -
  \dt\left(\overline U_i+ \DA_i\AV \overline v_i\right).
  \label{eq:one-d-RA-diagnostic}
\end{equation}
This test does not use a field solve.  It only checks whether the particle orbit quadrature
satisfies the finite-difference identity in Lemma \ref{lem:exact-chain-rule}.  Table
\ref{tab:one-d-cell-crossing-linear} compares one Gauss rule over the full path with the
same Gauss rule applied after splitting at the crossed knot.  Once the path crosses the knot,
the unsplit rule is no longer exact.  The split-path rule restores the identity to roundoff.

\begin{table}[htbp]
\centering
\caption{
One-dimensional chain-rule residual for a linear spline when the particle crosses a spline knot. The unsplit full-path quadrature treats the piecewise-polynomial integrand as a single polynomial and produces a residual that increases with crossing size, whereas split-path quadrature restores the chain-rule identity to roundoff.
}
\label{tab:one-d-cell-crossing-linear}
\begin{tabular}{c c c}
\hline
Crossing size in cell widths & Full-step quadrature & Split-path quadrature \\
\hline
$0.01$ & $3.24\times 10^{-4}$ & $1.27\times 10^{-16}$ \\
$0.10$ & $3.24\times 10^{-3}$ & $6.11\times 10^{-16}$ \\
\hline
\end{tabular}
\end{table}

The same test also explains why increasing the quadrature order alone is not the correct
fix.  If the path is not split, the integrand is not a single polynomial on the quadrature
interval.  Smoother splines reduce the defect, but they do not remove its source.

\begin{table}[htbp]
\centering
\caption{
One-dimensional chain-rule residual for different spline degrees when the particle crosses a spline knot by $0.10$ cell widths.
Higher spline degree reduces the unsplit full-path quadrature error, but the chain-rule identity is recovered to roundoff only by splitting the path at the crossed knot.
}
\label{tab:one-d-cell-crossing-degree}
\begin{tabular}{c c c}
\hline
Spline degree & Unsplit full-path quadrature & Split-path quadrature \\
\hline
$1$ & $3.24\times 10^{-3}$ & $6.11\times 10^{-16}$ \\
$2$ & $2.40\times 10^{-6}$ & $1.69\times 10^{-15}$ \\
$3$ & $5.87\times 10^{-8}$ & $4.44\times 10^{-16}$ \\
\hline
\end{tabular}
\end{table}

For the multidimensional method, let ${\cal K}_\alpha$ be the set of spline knots in
coordinate direction $\alpha$.  For particle $i$, construct the breakpoint set
\begin{equation}
  {\cal B}_i
  =
  \{0,1\}
  \cup
  \left\{
    \frac{\xi-x_{i,\alpha}^n}{\Delta x_{i,\alpha}}
    :
    \xi\in {\cal K}_\alpha,
    \quad
    0<\frac{\xi-x_{i,\alpha}^n}{\Delta x_{i,\alpha}}<1,
    \quad
    \alpha=1,2,3
  \right\}.
  \label{eq:breakpoint-set}
\end{equation}
If $\Delta x_{i,\alpha}=0$, that coordinate contributes no breakpoints.  After sorting and
removing duplicates, write
\begin{equation}
  0=s_0<s_1<\cdots<s_{M_i}=1.
\end{equation}
The orbit integrals are then evaluated piecewise.  For example,
\begin{equation}
  \Sbar_{ig}
  =
  \sum_{m=0}^{M_i-1}
  \int_{s_m}^{s_{m+1}} S_g(\xx_i(s))\dd s,
  \label{eq:split-Sbar}
\end{equation}
and
\begin{equation}
  [\DA_i \AV]_{\ell j}
  =
  \sum_{m=0}^{M_i-1}\sum_g
  \int_{s_m}^{s_{m+1}}
  A_{\ell,g}(s)\,\partial_{x_j}S_g(\xx_i(s))\dd s.
  \label{eq:split-DA}
\end{equation}

\begin{algorithm}[H]
\caption{Picard iteration for the orbit-discrete-gradient CN potential PIC step}
\label{alg:picard-orbit-cn}
\begin{algorithmic}[1]
\Require Particle data $(\xx_i^n,\vv_i^n,\PP_i^n)$, history velocity $\vv_i^{n-1}$, and mesh data $(\phi^n,\psi^n,\AV^n,\UU^n,\rho^n)$.
\Ensure Updated particle and mesh data at time level $n+1$.
\State Initialize the end-of-step velocity by extrapolation:
\Statex \hspace{2em}$\vv_i^{n+1,0}=2\vv_i^n-\vv_i^{n-1}$.
\For{$p=0,1,2,\ldots,p_{\max}$}
  \State Form the orbit implied by the current iterate:
  \Statex \hspace{2em}$\vbar_i^{\,p}=\frac{1}{2}(\vv_i^{n+1,p}+\vv_i^n)$,
  \qquad
  $\xx_i^{n+1,p}=\xx_i^n+\dt\,\vbar_i^{\,p}$.
  \State For each particle, build ${\cal B}_i^p$ from all spline-knot crossings along the unwrapped path.
  \State Evaluate $\Sbar_{ig}^{\,p}$ and $\DA_i^{\,p}$ using split-path quadrature on the intervals in ${\cal B}_i^p$.
  \State Deposit the orbit current
  \Statex \hspace{2em}$\displaystyle \JJ_g^{n+1/2,p}=\frac{1}{\dV}\sum_i q_i\vbar_i^{\,p}\Sbar_{ig}^{\,p}$.
  \State Update charge from continuity:
  \Statex \hspace{2em}$\rho^{n+1,p}=\rho^n-\dt\,\divgh\JJ^{n+1/2,p}$,
  \qquad
  $\rho^{n+1/2,p}=\frac{1}{2}(\rho^{n+1,p}+\rho^n)$.
  \State Solve the CN potential equations for $(\phi^{n+1,p},\psi^{n+1,p},\AV^{n+1,p},\UU^{n+1,p})$ using $(\rho^{n+1/2,p},\JJ^{n+1/2,p})$.
  \State Form midpoint mesh fields, for example
  \Statex \hspace{2em}$\AV^{n+1/2,p}=\frac{1}{2}(\AV^{n+1,p}+\AV^n)$,
  \qquad
  $\UU^{n+1/2,p}=\frac{1}{2}(\UU^{n+1,p}+\UU^n)$.
  \State Gather $\gphibar_i^{\,p}$ and $\Ubar_i^{\,p}$ with the same orbit weights $\Sbar_{ig}^{\,p}$.
  \State Advance generalized momentum:
  \Statex \hspace{2em}$\PP_i^{n+1,p}=\PP_i^n+
  \dt\left[-q_i\gphibar_i^{\,p}+q_i(\DA_i^{\,p}\AV)^T\vbar_i^{\,p}\right]$.
  \State Update the end-of-step velocity:
  \Statex \hspace{2em}$\displaystyle
  \vv_i^{n+1,p+1}=\frac{1}{m_i}\left[
  \PP_i^{n+1,p}-q_i\AV_h^{n+1,p}(\xx_i^{n+1,p})
  \right]$.
  \State Compute the fixed-point errors in $\JJ^{n+1/2,p}$ and $\vv_i^{n+1,p+1}$.
  \State Optionally compute the chain-rule diagnostic
  \Statex \hspace{2em}$\displaystyle
  R_{A,i}^{\,p}=\AV_h^{n+1,p}(\xx_i^{n+1,p})-
  \AV_h^n(\xx_i^n)-
  \dt\left(\Ubar_i^{\,p}+(\DA_i^{\,p}\AV)\vbar_i^{\,p}\right)$.
  \If{the current and velocity fixed-point errors are below tolerance}
    \State Accept the iterate and exit the Picard loop.
  \EndIf
\EndFor
\State Set $(\xx_i^{n+1},\vv_i^{n+1},\PP_i^{n+1})=(\xx_i^{n+1,p},\vv_i^{n+1,p+1},\PP_i^{n+1,p})$ and keep the corresponding mesh fields.
\end{algorithmic}
\end{algorithm}

On each subinterval the integrand lies in a single tensor-product polynomial patch.  For a
spline of degree $r$ in three dimensions, the degree of the integrands in these orbit terms is
bounded by $3r$ on each split segment.  Thus a Gauss rule with $n_q$ points satisfying
\begin{equation}
  2n_q-1\ge 3r
\end{equation}
is sufficient for exact integration on each segment.

This explains a failure mode that is otherwise confusing.  The continuity update can keep the
Lorenz gauge and Gauss's law near roundoff even when the orbit integrals are only
approximate, because those properties are determined by the mesh current and charge.  Energy
conservation is more sensitive.  If the path is not split at spline knots, the code solves a
nearby but different particle problem, and the residual \eqref{eq:one-d-RA-diagnostic}
appears as accumulated energy drift.

\subsection{Picard iteration used in the numerical experiments}
\label{sec:picard-iteration-crossings}

We close this section by giving the nonlinear iteration used in the numerical experiments.
The iteration is intentionally simple.  Its role is to demonstrate the conservative
particle-field coupling, not to provide the most efficient nonlinear solver.  The conservation
statement is tied to the discrete equations being solved, not to whether those equations are
solved by Picard, Newton, Newton--Krylov, or an accelerated fixed point method.

There are two practical points in Algorithm \ref{alg:picard-orbit-cn}.  First, the current
used in the field solve is the same orbit current used in the deposit/gather power identity.
Second, every orbit integral is split at the spline knots crossed by the particle during the
time step.  Without this split, the residual $R_{A,i}$ is not a roundoff-level diagnostic, and
exact total energy conservation should not be expected in long runs.

\section{Numerical experiments}
\label{sec:numerics}

This section provides the numerical evidence for the method.  The purpose of the first set of tests is not to show a complicated plasma calculation, but to verify each identity used in the proof.  These tests are used as diagnostics for the overall code when running the standard PIC benchmarks such as the two-stream instability.  We have applied the method to classic test cases of weak and strong Landau damping, and 
method has performed well.  
The code and all of these test cases can all be found in the GitHub repo \url{https://github.com/sgong11/Unstaggered_PIC}.  The code is a teaching code, meaning it is a serial C++ code meant to be a clear explanation of the method.

\subsection{Gauge and Gauss diagnostics}
\label{subsec:gauge-gauss-tests}

For the gauge and Gauss diagnostics, we compute
\begin{equation}
  G_h^n=\frac{1}{\kappaC^2}\psi^n+\divgh\AV^n,
  \label{eq:gauge-diagnostic}
\end{equation}
and
\begin{equation}
  R_h^n=\divgh\EE^n-\sigma_1\rho^n.
  \label{eq:gauss-diagnostic}
\end{equation}
The tests should show that these quantities remain at solver tolerance when the continuity update is solved with the same spectral divergence used in the field update.  If these diagnostics drift, the likely causes are: the continuity equation is not being solved at the same time centering, the field equations are not converged, or different discrete derivative operators are being mixed.  These residuals confirm that we are conserving the Lorenz gauge and Gauss's law in our unstaggered formulation over the duration of the run.  

\subsection{Unit tests for the particle-mesh identities}
\label{subsec:unit-tests}

We introduce the orbit chain-rule residual, discussed in section \ref{sec:cell-boundaries-conservation}, to keep track of the errors introduced in the energy along the particle trajectory, 
\begin{equation}
  R_{A,i}=\AV_h^{n+1}(\xx_i^{n+1})-\AV_h^n(\xx_i^n)-\dt\left(\Ubar_i+(\DA_i \AV)\vbar_i\right).
  \label{eq:RA-test}
\end{equation}
For the orbit-discrete-gradient construction, $R_{A,i}$ should be zero up to quadrature and roundoff.  This is the most direct test that demonstrates the need for the new particle push used in this formulation.

The second test checks the deposit/gather power identity
\begin{equation}
  \sum_i q_i\vbar_i\cdot\overline{\EE}_i=
  \sum_g\JJ_g^{n+1/2}\cdot\EE_g^{n+1/2}\dV.
  \label{eq:power-test}
\end{equation}
This residual should also be at roundoff if the same orbit-averaged shapes are used for current deposition and field gather.

\subsection{Energy diagnostics}
\label{subsec:energy-diagnostics}

For every step we will report the split energy residuals
\begin{align}
  R_{\rm field} &= W^{n+1}-W^n+\dt\sum_g\JJ_g^{n+1/2}\cdot\EE_g^{n+1/2}\dV,\label{eq:R-field}\\
  R_{\rm particle} &= K^{n+1}-K^n-\dt\sum_iq_i\vbar_i\cdot\overline{\EE}_i,\label{eq:R-particle}\\
  R_{\rm total} &= (K^{n+1}+W^{n+1})-(K^n+W^n).
  \label{eq:R-total}
\end{align}
The orbit-discrete-gradient push should drive $R_{\rm particle}$ to quadrature and solver tolerance.
These are effective metrics for checking whether the nonlinear solve has been converged sufficiently.

\subsection{Cold two-stream instability}
\label{subsec:two-stream}


The  numerical test we consider as a demonstration is a three-dimensional version of the cold two-stream
instability.  The purpose of this example is not only to verify that the method
captures the expected roll-up of the two electron streams, but also to stress the
orbit integration used in the energy-conserving particle push.  In particular, this
test is designed so that a nontrivial number of particles cross cell boundaries
during the run.  This makes it a useful test for distinguishing the conservative
split-orbit implementation from an implementation that applies quadrature over the
full particle path without splitting at spline knots.

The computational domain is periodic in all directions and is given by
\[
    \Omega = [0,L_x]\times [0,L_y]\times [0,L_z],
    \qquad
    L_x=L_y=L_z=2\pi .
\]
The mesh is uniform with
\[
    N_x=N_y=N_z=16,
    \qquad
    \Delta x=\Delta y=\Delta z=\frac{2\pi}{16}.
\]
The time step is
\[
    \Delta t = 2.5\times 10^{-2},
\]
and the simulation is run for
\[
    N_t = 1200
\]
time steps.  Thus the final time is
\[
    T = N_t\Delta t = 30.
\]
All quantities are reported in the nondimensional variables used in the code.

The plasma consists of mobile electrons and a fixed neutralizing ion background.
The background density is
\[
    n_0 = 1.
\]
The normalized constants are
\[
    \kappaC =1,\qquad \sigma_1=1,\qquad \sigma_2=1.
\]
Only the electrons are represented by particles.  The ions enter as a spatially
uniform positive background charge.  The electron macroparticles are initialized as
two equal counterstreaming populations with drift velocities $+v_0$ and $-v_0$ in
the $x$ direction, where
\[
    v_0 = 0.3 .
\]
The perturbation is introduced in the particle velocity, following the standard
cold two-stream setup:
\[
    v_{x,i}^{+}(0)
    =
    v_0 + \epsilon \sin(kx_i),
    \qquad
    v_{x,i}^{-}(0)
    =
    -v_0 + \epsilon \sin(kx_i),
\]
where
\[
    \epsilon = 2.0\times 10^{-2},
    \qquad
    k = \frac{2\pi m}{L_x},
    \qquad
    m=1 .
\]
The transverse velocities are initialized to zero,
\[
    v_{y,i}^{\pm}(0)=v_{z,i}^{\pm}(0)=0.
\]

The particle locations are chosen so that the initial distribution is uniform in
space.  In the $x$ direction, particles are placed on a linearly spaced
cell-centered lattice.  The same $x$ lattice is used for each $y$-$z$ cell, and
the two streams are placed at identical positions.  This gives a clean
three-dimensional realization of a one-dimensional two-stream perturbation.  If
$N_{\mathrm{ppc}}$ denotes the number of particle pairs per mesh cell, then the
number of electron macroparticles is
\[
    N_p = 2 N_{\mathrm{ppc}} N_xN_yN_z .
\]
For this test,
\[
    N_{\mathrm{ppc}} = 16,
\]
so that
\[
    N_p = 2(16)(16^3) = 131072.
\]
The charge of each electron macroparticle is chosen so that the mobile electron
charge balances the uniform ion background:
\[
    q_p = -\frac{n_0|\Omega|}{N_p},
    \qquad
    m_p = |q_p|.
\]
Thus $q_p/m_p=-1$ for each electron macroparticle.

The initial charge density is formed by depositing the electron charge on the mesh
and adding the uniform ion background.  The mean charge is removed so that the
periodic zero mode is exactly neutral:
\[
    \rho^0_g
    =
    \rho^0_{e,g}
    +
    n_0
    -
    \left\langle
    \rho^0_e+n_0
    \right\rangle .
\]
The initial current is deposited from the paired electron streams.  Since the
streams are located at the same positions and have equal and opposite drift
velocities, the initial current is zero up to the perturbation and roundoff.

The initial scalar potential is obtained from the periodic Poisson equation
\[
    -\nabla_h^2 \phi^0 = \frac{\rho^0}{\epsilon_0},
\]
with mean-zero $\phi^0$.  The remaining initial field variables are
\[
    \psi^0 = 0,
    \qquad
    {\bf A}^0 = {\bf 0},
    \qquad
    {\bf U}^0 = {\bf 0}.
\]
The initial generalized momentum is therefore
\[
    {\bf P}_i^0
    =
    m_i{\bf v}_i^0
    +
    q_i{\bf A}_h^0({\bf x}_i^0)
    =
    m_i{\bf v}_i^0 .
\]

The orbit integrals are evaluated using tensor-product linear B-spline particle
shapes,
\[
    r=1,
\]
and a Gauss rule with
\[
    n_q=16
\]
points on each orbit segment.  As the Gauss rule is exact in this case, the answer is identical with the analytical first order weights in Appendix A.  The conservative method splits each particle orbit
at all crossed spline knots before applying the quadrature rule.  To demonstrate
why this is necessary, we compare against an otherwise identical run in which the
quadrature is applied over the full particle path without splitting.  In the input
deck this is controlled by
\[
    \texttt{split\_orbit\_at\_knots}
    =
    \begin{cases}
    \texttt{true}, & \text{conservative split-orbit method},\\
    \texttt{false}, & \text{unsplit comparison run}.
    \end{cases}
\]

\begin{figure}
    \centering
    \includegraphics[width=0.7\linewidth]{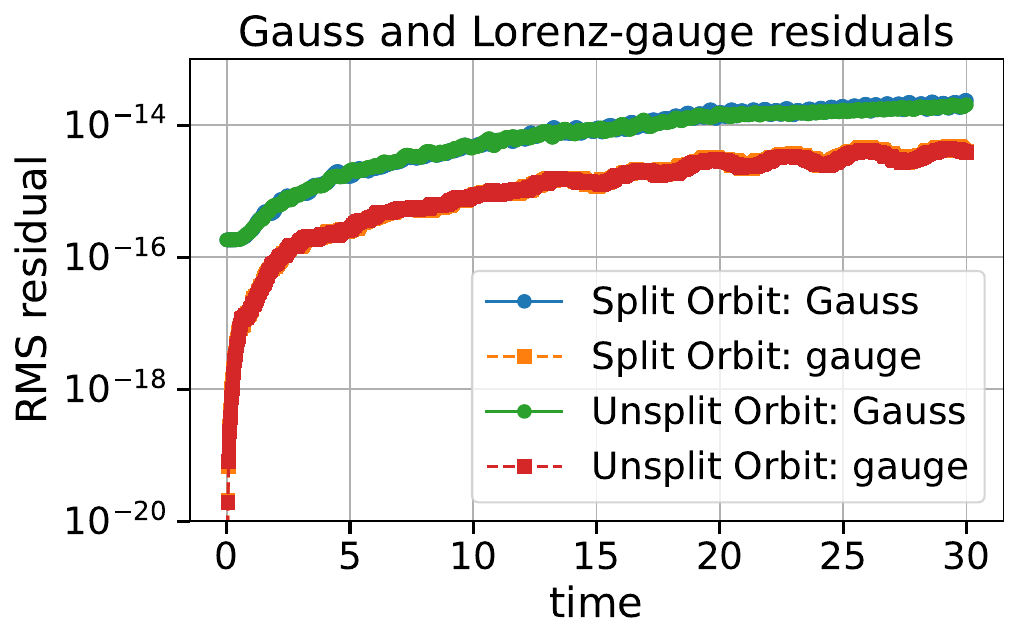}
    \caption{Gauge and Gauss residuals in the 3D cold two-stream test. Both split- and unsplit-orbit variants keep the Lorenz-gauge residual $G^n_h$ and Gauss-law residual $R_h^n$ at approximately roundoff/solver tolerance through T = 30. This shows that involution preservation is controlled by the compatible current- continuity-field update, not by the orbit splitting used for energy conservation.}
    \label{fig:constraints}
\end{figure}

The nonlinear Picard iteration is stopped when both the midpoint current and the
particle velocity have converged.  The tolerance and maximum iteration count are
\[
    \tau_{\mathrm{Picard}} = 10^{-11},
    \qquad
    p_{\max}=16.
\]
A final consistency sweep is applied after convergence.  This means that the final
diagnostics are evaluated using the same current, fields, orbit integrals, and
particle state that define the accepted time step.

\begin{figure}[htbp]
    \centering
    \begin{subfigure}[b]{0.48\linewidth}
        \centering
        \includegraphics[width=\linewidth]{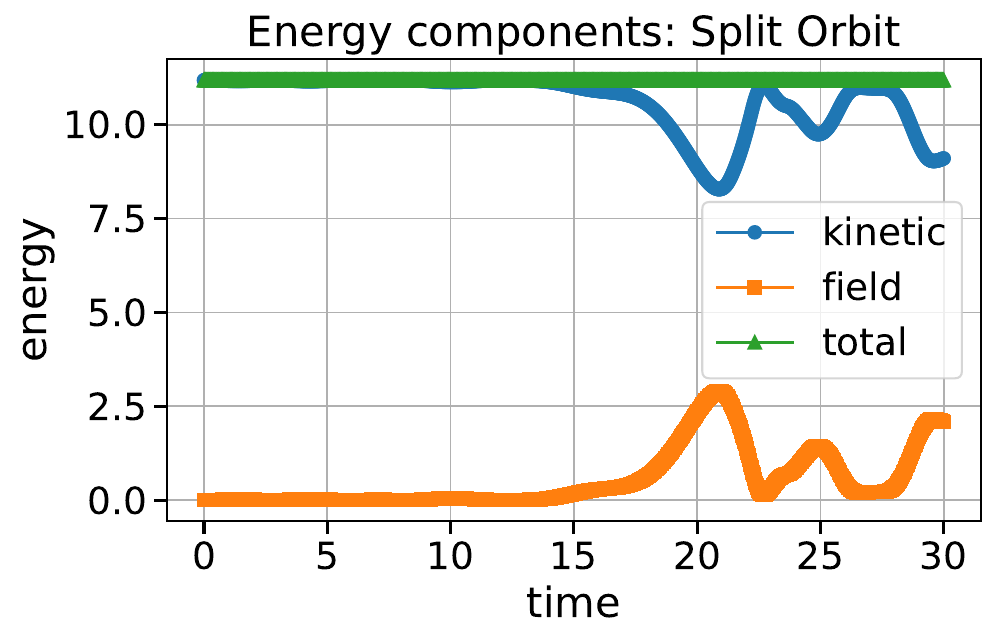}
        \caption{Energy exchange in the split-orbit method.}
        \label{fig:components_primary}
    \end{subfigure}
    \hfill
    \begin{subfigure}[b]{0.48\linewidth}
        \centering
        \includegraphics[width=\linewidth]{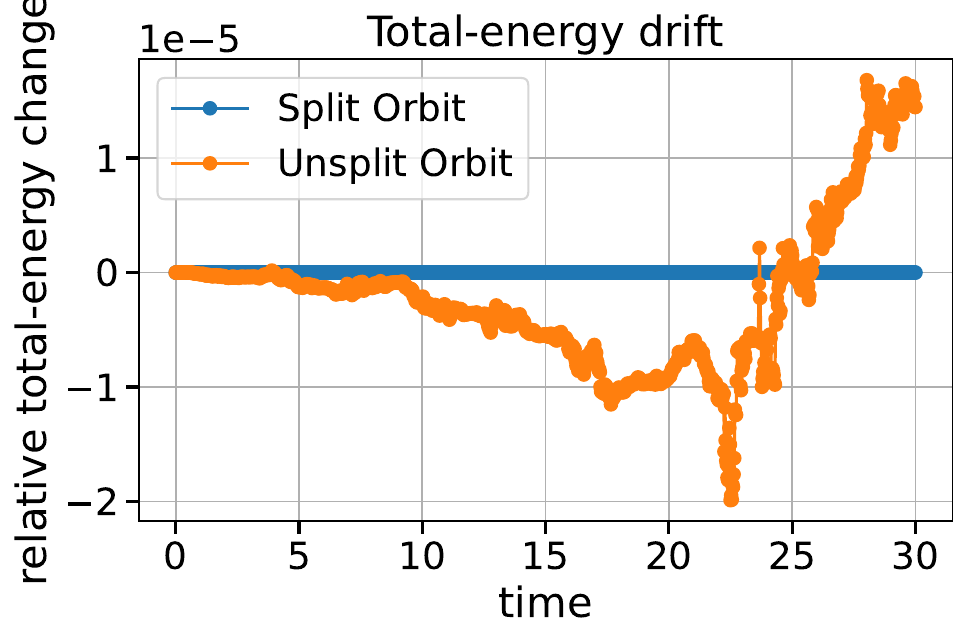}
        \caption{Comparison of total energy preservation.}
        \label{fig:drift_compare}
    \end{subfigure}

    \caption{
    Energy behavior for split- and unsplit-orbit integration.
    (a) In the split-orbit method, energy is exchanged between kinetic and field components while the total energy remains constant. Here, green denotes total energy, orange denotes potential energy, and blue denotes kinetic energy.
    (b) Relative energy drift as a function of time for split and unsplit orbits. The split-orbit method preserves total energy to roundoff accuracy, whereas the unsplit-orbit method develops an $\mathcal{O}(10^{-5})$ relative drift, consistent with an orbit chain-rule defect.
    }
    \label{fig:energy_components_primary}
\end{figure}

In Figure \ref{fig:constraints}  we plot the Gauge and Gauss residuals for both the split and unsplit orbits. We see that both methods preserve the discreet Gauge condition and Gauss's Law to machine precision ion.  
This confirms that exact energy conservation does not impact the conservation of the involutions on the mesh.  Indeed, the original method has this property.  
Figure \ref{fig:energy_components_primary}(a) shows exact energy conservation for the method when orbits are split, showing both the kinetic, potential and the total. 
The relative energy drift,
\[
    \frac{{\cal E}^n-{\cal E}^0}{{\cal E}^0},
\]
for the split and unsplit particle orbit method is plotted in Figure \ref{fig:energy_components_primary} (b).  We see the that unsplit orbits have errors $10^{-5}$, where the split orbits preserve relative energy to machine precision.

Figure~\ref{fig:defect}(a) plots the field-energy and particle chain-rule defects as functions of time for the split-orbit method. We see that, for the split method, the residuals are maintained at or below the iteration tolerance. 
\begin{figure}[htbp]
    \centering
    \begin{subfigure}[b]{0.48\linewidth}
        \centering
        \includegraphics[width=\linewidth]{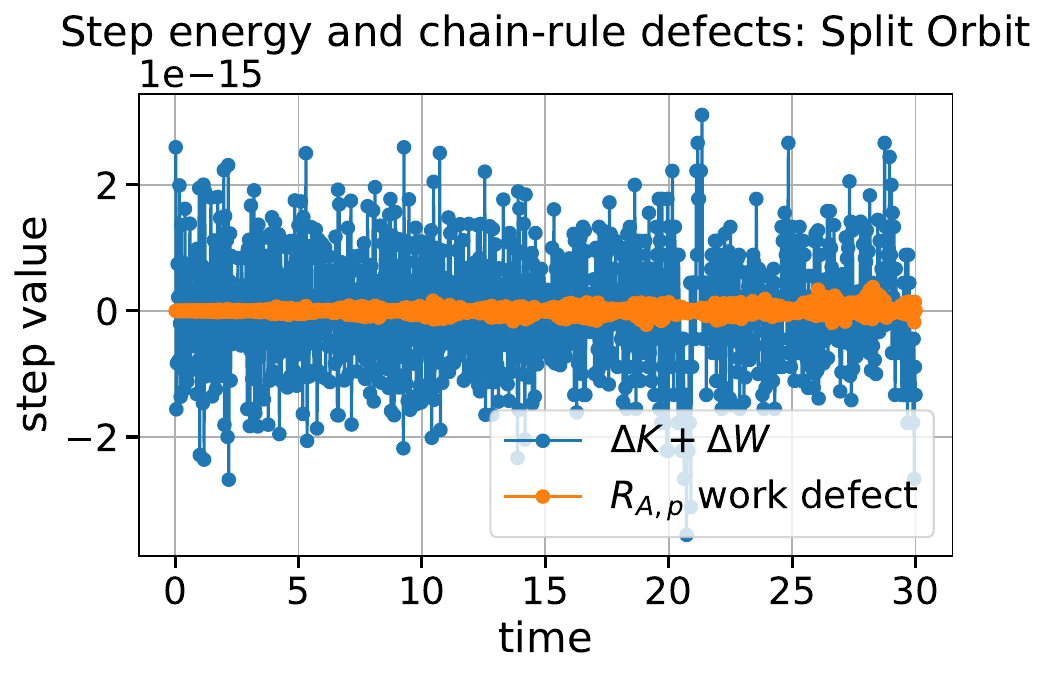}
        \caption{Step energy defect.}
        \label{fig:defect_primary}
    \end{subfigure}
    \hfill
    \begin{subfigure}[b]{0.48\linewidth}
        \centering
        \includegraphics[width=\linewidth]{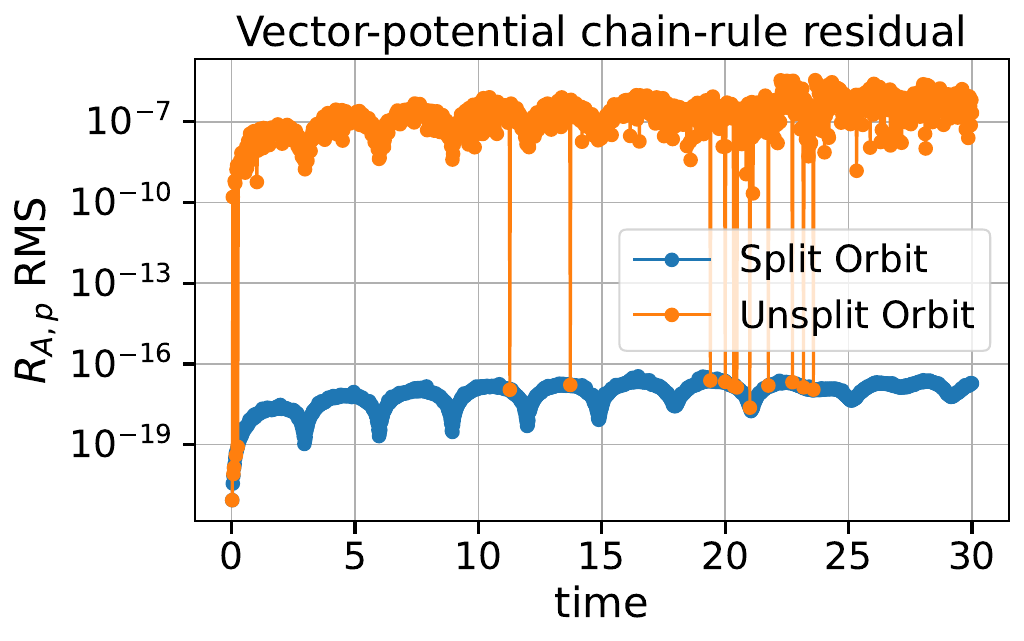}
        \caption{RMS vector-potential chain-rule residual.}
        \label{fig:rms_compare}
    \end{subfigure}

    \caption{
    Energy and vector-potential chain-rule defects.
    (a) For the split-orbit method, both the step energy defect $\Delta K + \Delta W$ and the particle work/chain-rule defect remain at the solver tolerance or roundoff level.
    (b) Splitting at spline knots reduces the RMS vector-potential chain-rule residual by approximately ten orders of magnitude relative to the unsplit rule, explaining the energy drift observed in Fig.~\ref{fig:drift_compare}.
    }
    \label{fig:defect}
\end{figure}
The energy diagnostics separate the three possible sources of error: particle-grid
work inconsistency, field energy inconsistency, and particle orbit inconsistency.
For the split-orbit method, all three residuals should remain at roundoff or
solver tolerance.  For the unsplit comparison, the gauge and Gauss residuals can
remain small while $R_{A,i}^n$ and $R_{\mathrm{part}}^n$ become nonzero.  
Figure \ref{fig:defect}(b) shows  the orbit chain-rule residual \eqref{eq:RA-test} for both the split and unsplit particle orbits. 
If the orbit integrals are evaluated consistently, then
\[
    R_{A,i}^n = 0
\]
up to quadrature error and roundoff.  If the path crosses spline knots and the
integral is not split at the crossings, then this residual becomes nonzero and
appears as a defect in the particle energy balance.
This diagnostic is central to the present test.  We see that the unsplit orbit integrator has 10 orders of magnitude larger error than the split particle orbits.  
It is clear that this method is the main contribution to the relative energy drift for the unsplit method. 
\begin{figure}[htbp]
    \centering
    \begin{subfigure}[b]{0.48\linewidth}
        \centering
        \includegraphics[width=\linewidth]{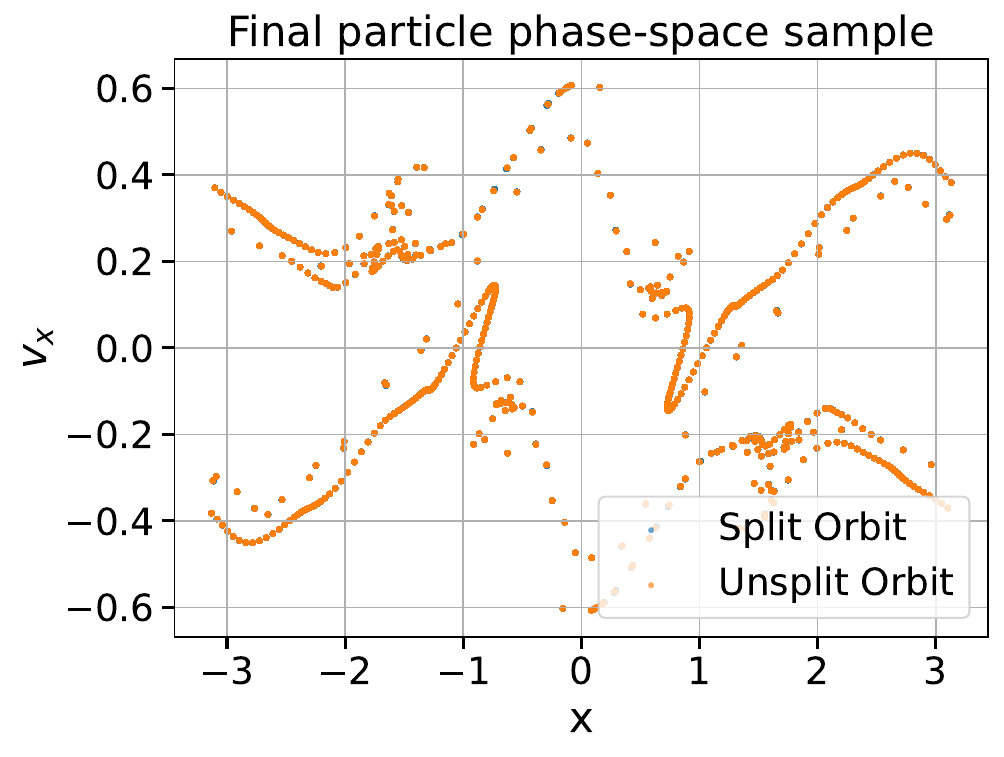}
        \caption{Phase-space slice at $t=30$.}
        \label{fig:phase_space_primary}
    \end{subfigure}
    \hfill
    \begin{subfigure}[b]{0.48\linewidth}
        \centering
        \includegraphics[width=\linewidth]{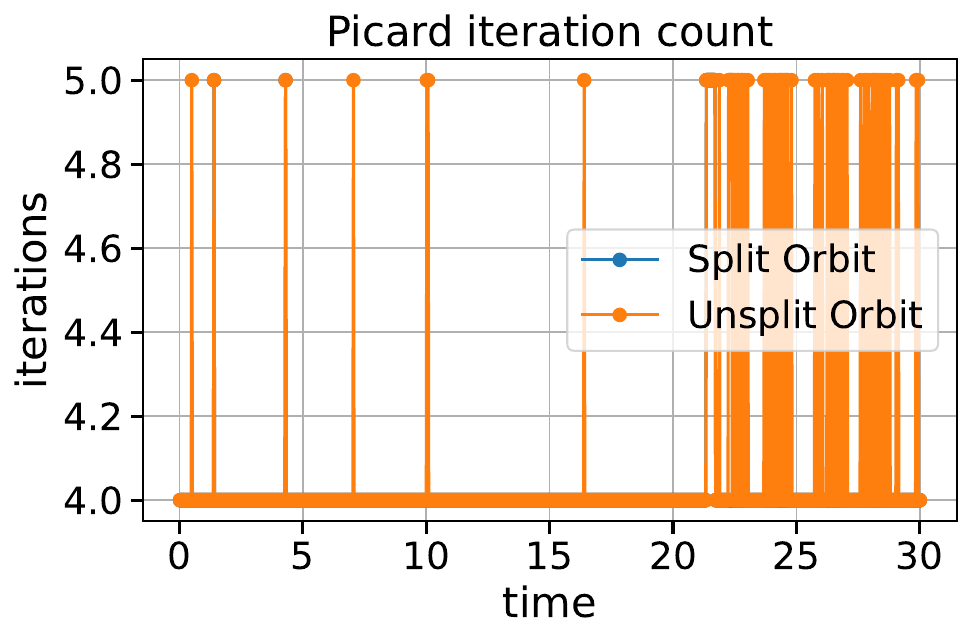}
        \caption{Picard iteration count.}
        \label{fig:picard_iterations_compare}
    \end{subfigure}
    \caption{
    Phase-space structure and nonlinear iteration cost.
    (a) Slice of the $x$--$v_x$ phase space at $T=30$ from the 3D simulation. The split- and unsplit-orbit implementations produce visually similar samples, so the conservation error is not evident from this diagnostic alone.
    (b) Picard iteration count as a function of time. For this test, the simple Picard iteration converges in approximately four to five iterations per step.
    }
    \label{fig:twostream}
\end{figure}
Figure~\ref{fig:twostream}(a) plots the phase space for the split and unsplit methods at time $t=30$. While there is not much visible difference between the two solutions, this is mainly because the energy drift is small compared with the $O(1)$ phase-space values.  

For this test problem, we also look at iteration count for the simple Picard iteration used in this work.  As in \cite{ChenChaconBarnes2011}, it is expected that a full Newton-Krylov method  would  reduce the associated iteration count.  Further,  pre-conditioning can greatly increase efficiency, and will be pursued in future versions of the method.  However the main point to this paper is to introduce the new energy conserving formulation. As such, this will be the subject of future work. Figure~\ref{fig:twostream}(b) shows the Picard iteration count at each time step.  It is observed that with this very simple formulation, the iteration count is reasonable.

\section{Conclusion}
\label{sec:conclusion}

This paper introduces an unstaggered energy-conserving particle-in-cell method.  We start by extending the Gauge/Gauss conserving explicit unstaggered PIC method to the CN continuity-based charge update and potential solver.  The remaining issue is the discrete particle work.  A pointwise midpoint approximation to $\grad\AV$ does not satisfy the finite-difference chain rule for the mesh-interpolated vector potential along a particle orbit.  The introduced orbit-discrete-gradient particle push fixes this by constructing the derivative of $\AV_h$ that makes the chain rule exact.

The resulting scheme has three important properties.  First, the particle update remains in canonical momentum and therefore avoids explicit time differencing of the vector potential in the particle force.  Second, the current and electric-field gather use the same orbit-averaged shape weights, so particle work equals mesh work.  Third, the orbit-discrete-gradient of $\AV_h$ cancels the vector-potential terms in the particle energy balance exactly.  Together with the CN field-energy balance, this gives total-energy conservation for the fully coupled nonlinear solve.

The numerical results confirm the central identities used in the analysis. In the three-dimensional cold two-stream test, the continuity-based charge update and Crank--Nicolson potential solve preserve the Lorenz gauge and Gauss's law to roundoff for both the split- and unsplit-orbit variants, showing that the constraint preservation is controlled by the compatible current-continuity-field update. The distinction appears in the energy diagnostics: when particle paths are split at spline knots, the orbit-discrete-gradient chain-rule residual remains at roundoff and the total energy is conserved to machine precision, whereas the unsplit orbit integration produces a nonzero chain-rule defect and a corresponding energy drift. These results demonstrate that exact energy conservation in the unstaggered potential formulation requires both the  orbit-averaged scatter/gather map and the orbit-discrete-gradient of the mesh-interpolated vector potential. Future work will extend the formulation to the relativistic Vlasov--Maxwell system, develop more efficient nonlinear solvers based on Anderson acceleration and Newton--Krylov methods with physics-based preconditioning, and build a scalable parallel implementation aimed at relativistic laser--plasma interaction problems.

\section*{Acknowledgments}
AC would like to acknowledge the support from AFOSR grants FA9550-24-1-0254, DOE grant DE-SC0023164, DOE/NNSA grant DE-NA0004265 and ONR grant N00014-24-1-2242. LC contribution was funded by the Office of Advanced Scientific Computing Research of the U.S. Department of Energy, and performed at Los Alamos National Laboratory under contract 89233218CNA000001. The authors acknowledge the use of ChatGPT for grammar checking.
\appendix

\section{Exact first-order orbit weights for scatter and gather}
\label{app:first_order_orbit_weights}

This appendix gives the analytic first-order orbit weights used in the
orbit-averaged scatter and gather. The key point is that these are path
integrals. They become one-dimensional polynomial integrals only after the
mesh shape has been pulled back to the particle orbit. Since a first-order
B-spline is piecewise linear, the path must first be split at every crossed
spline knot. On each split segment the active cell and the active
tensor-product stencil are fixed.

Let
$$
        d_i=x_i^{n+1}-x_i^n
$$
be the unwrapped particle displacement over one time step, and let
$$
        x_i(s)=x_i^n+s d_i, \qquad 0\le s\le 1 .
$$
Let
$$
        0=s_0<s_1<\cdots<s_{M_i}=1
$$
be the sorted list of all knot-crossing parameters, including the endpoints.
On one segment, set
$$
        h_m=s_{m+1}-s_m, \qquad \eta=s-s_m, \qquad 0\le \eta\le h_m .
$$
For mesh spacings $H_x,H_y,H_z$, introduce local normalized coordinates
$$
        r_\alpha(\eta)=r_{\alpha,m}+\beta_\alpha\eta,
        \qquad
        \beta_\alpha=\frac{d_{i,\alpha}}{H_\alpha},
        \qquad
        \alpha\in\{x,y,z\}.
$$
Here $r_{\alpha,m}$ is the normalized coordinate at the beginning of the
segment. The split construction guarantees that $0\le r_\alpha(\eta)\le 1$
on the segment, up to endpoint conventions at the knots.

For first-order weighting,
$$
        w_{\alpha,0}(\eta)=1-r_\alpha(\eta),
        \qquad
        w_{\alpha,1}(\eta)=r_\alpha(\eta).
$$
Write each one-dimensional weight as
$$
        w_{\alpha,\nu}(\eta)
        =
        \lambda_{\alpha,\nu}+\mu_{\alpha,\nu}\eta,
        \qquad \nu\in\{0,1\},
$$
where
$$
        \lambda_{\alpha,0}=1-r_{\alpha,m},\qquad
        \mu_{\alpha,0}=-\beta_\alpha,
        \qquad
        \lambda_{\alpha,1}=r_{\alpha,m},\qquad
        \mu_{\alpha,1}=\beta_\alpha .
$$

For a tensor-product offset $\nu=(\nu_x,\nu_y,\nu_z)\in\{0,1\}^3$, define
the local mesh point on segment $m$ by $g_\nu^{(m)}$. The pulled-back shape is
$$
        S_\nu(x_i(s_m+\eta))
        =
        w_{x,\nu_x}(\eta)
        w_{y,\nu_y}(\eta)
        w_{z,\nu_z}(\eta).
$$
Set
$$
        L_\alpha=\lambda_{\alpha,\nu_\alpha},
        \qquad
        M_\alpha=\mu_{\alpha,\nu_\alpha}.
$$
Then
$$
\begin{aligned}
        S_\nu(x_i(s_m+\eta))
        &=
        (L_x+M_x\eta)(L_y+M_y\eta)(L_z+M_z\eta)  \\
        &=
        c_0+c_1\eta+c_2\eta^2+c_3\eta^3,
\end{aligned}
$$
with
$$
\begin{aligned}
        c_0 &= L_xL_yL_z,\\
        c_1 &= M_xL_yL_z+L_xM_yL_z+L_xL_yM_z,\\
        c_2 &= M_xM_yL_z+M_xL_yM_z+L_xM_yM_z,\\
        c_3 &= M_xM_yM_z .
\end{aligned}
$$
Thus the exact segment contribution is
$$
        I_\nu^{(m)}
        =
        \int_{s_m}^{s_{m+1}} S_\nu(x_i(s))\,ds
        =
        c_0h_m+\frac{c_1}{2}h_m^2+\frac{c_2}{3}h_m^3+\frac{c_3}{4}h_m^4 .
$$
The orbit weight is accumulated by adding each segment contribution to its
corresponding global mesh point:
$$
        S_{i,g_\nu^{(m)}} \mathrel{+}= I_\nu^{(m)} .
$$
Equivalently,
$$
        S_{ig}
        =
        \sum_{m=0}^{M_i-1}
        \sum_{\nu\in\{0,1\}^3:\,g_\nu^{(m)}=g}
        I_\nu^{(m)} .
$$
Periodic wrapping is applied to the mesh index only after the local stencil
has been identified.

The orbit-averaged current is
$$
        J_g^{n+1/2}
        =
        \frac{1}{\Delta V}
        \sum_i q_i v_i S_{ig},
        \qquad
        v_i=\frac{x_i^{n+1}-x_i^n}{\Delta t}.
$$
The same weights are used for gather. For any mesh field $G_g$,
$$
        G_i
        =
        \sum_g G_g S_{ig}
        =
        \sum_{m=0}^{M_i-1}
        \sum_{\nu\in\{0,1\}^3}
        G_{g_\nu^{(m)}} I_\nu^{(m)} .
$$
This gives the scatter-gather work identity
$$
        \sum_i q_i v_i\cdot G_i
        =
        \sum_g J_g^{n+1/2}\cdot G_g\,\Delta V .
$$
In the energy proof this identity is used with $G_g=E_g^{n+1/2}$.

A useful local check is partition of unity. Since
$$
        w_{\alpha,0}(\eta)+w_{\alpha,1}(\eta)=1
$$
in each direction,
$$
        \sum_{\nu\in\{0,1\}^3} I_\nu^{(m)}=h_m .
$$
Therefore
$$
        \sum_g S_{ig}=1
$$
for every particle. Failure of this check usually indicates an error in the
knot crossings, stencil indices, or endpoint conventions.

\section{Exact first-order orbit-discrete gradients}
\label{app:first_order_orbit_gradients}

The energy-conserving canonical-momentum update also requires the orbit
average of the derivative of the same mesh interpolant used in
$P=mv+qA_h(x)$. This object is not the spectral gradient of $A$ gathered to
the particle. It is the orbit-discrete gradient of the interpolant
$$
        A_h(x)=\sum_g A_g S_g(x).
$$

Let the vector potential be linearly interpolated in the global orbit
parameter:
$$
        A_{\ell,g}(s)
        =
        (1-s)A_{\ell,g}^n+sA_{\ell,g}^{n+1}.
$$
On segment $m$, with $s=s_m+\eta$, define for the local mesh point
$g_\nu^{(m)}$
$$
        A_{\ell,\nu}^{(m)}
        =
        (1-s_m)A_{\ell,g_\nu^{(m)}}^n
        +
        s_m A_{\ell,g_\nu^{(m)}}^{n+1},
        \qquad
        C_{\ell,\nu}^{(m)}
        =
        A_{\ell,g_\nu^{(m)}}^{n+1}
        -
        A_{\ell,g_\nu^{(m)}}^n .
$$
Then
$$
        A_{\ell,g_\nu^{(m)}}(s_m+\eta)
        =
        A_{\ell,\nu}^{(m)}
        +
        C_{\ell,\nu}^{(m)}\eta .
$$
The dependence of $A_{\ell,\nu}^{(m)}$ on the global value $s_m$ is essential.

For first-order weights,
$$
        \frac{\partial w_{\alpha,0}}{\partial x_\alpha}
        =
        -\frac{1}{H_\alpha},
        \qquad
        \frac{\partial w_{\alpha,1}}{\partial x_\alpha}
        =
        \frac{1}{H_\alpha}.
$$
Define $\sigma_0=-1$ and $\sigma_1=1$. For derivative direction $j$, let $p$
and $q$ denote the two transverse coordinate directions. Then
$$
        \partial_{x_j}S_\nu(x_i(s_m+\eta))
        =
        \frac{\sigma_{\nu_j}}{H_j}
        w_{p,\nu_p}(\eta)w_{q,\nu_q}(\eta).
$$
The transverse product is quadratic:
$$
        w_{p,\nu_p}(\eta)w_{q,\nu_q}(\eta)
        =
        d_0+d_1\eta+d_2\eta^2,
$$
where
$$
        d_0=L_pL_q,
        \qquad
        d_1=M_pL_q+L_pM_q,
        \qquad
        d_2=M_pM_q .
$$
Thus the exact segment contribution to $[D_i]_{\ell j}$ is
$$
\begin{aligned}
        I_{\ell j,\nu}^{D,(m)}
        =
        \frac{\sigma_{\nu_j}}{H_j}
        \bigg[
        A_{\ell,\nu}^{(m)}
        \left(
        d_0h_m+\frac{d_1}{2}h_m^2+\frac{d_2}{3}h_m^3
        \right)
        +
        C_{\ell,\nu}^{(m)}
        \left(
        \frac{d_0}{2}h_m^2+\frac{d_1}{3}h_m^3+\frac{d_2}{4}h_m^4
        \right)
        \bigg].
\end{aligned}
$$
The orbit-discrete-gradient matrix is
$$
        [D_i\AV]_{\ell j}
        =
        \sum_{m=0}^{M_i-1}
        \sum_{\nu\in\{0,1\}^3}
        I_{\ell j,\nu}^{D,(m)} .
$$

With the same split segments and the same first-order orbit weights,
$$
        U_i
        =
        \sum_{m=0}^{M_i-1}
        \sum_{\nu\in\{0,1\}^3}
        U_{g_\nu^{(m)}}^{n+1/2} I_\nu^{(m)} .
$$
The definitions above satisfy the exact finite-difference chain rule
$$
        A_h^{n+1}(x_i^{n+1})-A_h^n(x_i^n)
        =
        \Delta t\,U_i+\Delta t\,(D_i\AV) v_i .
$$
This is the identity needed by the canonical-momentum energy balance. Since
the particle push uses $(D_i\AV)^T v_i$, the vector-potential terms cancel after
dotting with $v_i$:
$$
        v_i\cdot (D_i\AV)^T v_i
        =
        v_i\cdot (D_i\AV) v_i .
$$
The implementation diagnostic is
$$
        R_{A,i}
        =
        A_h^{n+1}(x_i^{n+1})
        -
        A_h^n(x_i^n)
        -
        \Delta t\left(U_i+(D_i\AV) v_i\right).
$$
For first-order weights with analytic split-path integration, $R_{A,i}$ should
be at roundoff. If a particle crosses a cell boundary and the integral is not
split at the crossed knot, the pulled-back integrand is not a single polynomial
on the integration interval, and this residual is generally nonzero.

\bibliographystyle{siamplain}
\bibliography{references}

\end{document}